\numberwithin{equation}{section}
\theoremstyle{plain}
\newtheorem{maintheorem}{Theorem}
\newtheorem{maincorollary}[maintheorem]{Corollary}
\newtheorem{theorem}{Theorem}[section]
\newtheorem{corollary}[theorem]{Corollary}
\newtheorem{lemma}[theorem]{Lemma}
\theoremstyle{definition}
\newtheorem{remark}[theorem]{Remark}
\renewcommand{\epsilon}{\varepsilon}
\newcommand{\Fix}{\operatorname{Fix}}
\newcommand{\intt}{\operatorname{int}}
\begin{document}

\title[A Kingman-like Theorem]
{A Kingman-like Theorem}

\thanks{ L.S. is partially supported Fapesb-JCB0053/2013, CNPq and PRODOC-UFBA/2014. V.C. is supported by CAPES.  }

\subjclass{Primary: 37A30; Secondary: 37C10.}
\renewcommand{\subjclassname}{\textup{2020} Mathematics
  Subject Classification}
\keywords{Ergodic Theorem, Birkhoff average}

\author{Vinicius Coelho}

\author{Luciana Salgado}

\address[L.S.]{Universidade Federal do Rio de Janeiro, Instituto de
   Matem\'atica\\
   Avenida Athos da Silveira Ramos 149 Cidade Universit\'aria, P.O. Box 68530,
   21941-909 Rio de Janeiro-RJ-Brazil }
 \email{lsalgado@im.ufjr.br, lucianasalgado@ufrj.br}

\address[V.C.]{Universidade Federal do Oeste da Bahia, Centro Multidisciplinar de Bom Jesus da Lapa\\
Av. Manoel Novais, 1064, Centro, 47600-000 - Bom Jesus da Lapa-BA-Brazil}
\email{viniciuscs@ufob.edu.br}

\begin{abstract}
We provide a Kingman-like Theorem for arbitrary finite measures and a version of Birkhoff's Theorem for bounded  observable. As an application, we show that Birkhoff's limit exists for some continuous observable, in an example of Bowen.
\end{abstract}

\date{\today}
\maketitle
\tableofcontents

\section{Introduction}
\label{sec:intro}

Let $(M,\mathcal{A},\mu)$ be a measure space equipped with a $\sigma$-finite measure, and $T:M\to M$  be a measurable map.

If $\mu(A) = \mu(T^{-1}(A))$ for all $A \in \mathcal{A}$ then $\mu$ is said to be \emph{invariant} under $T$ or, equivalently, $T$ is \emph{measure-preserving}.

Two of the most important results of invariant measures theory are  Kingman's Theorem (see \cite{AB09}) and Birkhoff's Theorem (see \cite{BIR31}).

The basic idea to proof Kingman's Theorem is to apply Fekete's Subadditive Lemma. This Lemma yields information about subadditive sequences $(a_{n})_{n}$ in $\mathbb{R}$ proving that the limit $\lim\limits_{n \to \infty} \frac{a_{n}}{n} = \inf \limits_{n}\frac{a_{n}}{n} =a $  and satisfies $-\infty \leq a < \infty$. This sequence occurs naturaly when we deal with invariant measures and subbaditive sequences of functions for a transformation in a manifold.

Derriennic \cite{D83} generalized Fekete's Lemma as follows. Let  $(a_{n})_{n}$ be a sequence in $\mathbb{R}$ and $(c_{n})_{n}$ be a sequence  such that $c_{n}\geq 0$. If $a_{n+m} \leq a_{n} +a_{m} + c_{n}$ for all $n,m \geq 1$, and $\lim\limits_{n} \frac{c_{n}}{n}=0$ then the limit $\lim\limits_{n} \frac{a_{n}}{n} = a$ and satisfies $-\infty \leq a < \infty$.  He utilizes this result and others techniques to provide a generalization for Kingman's Theorem.

Other  generalisations of Kingman's Theorem were proved by Akcoglu and Sucheston \cite{AS78} (for superadditive processes), Shurger \cite{S91} (a stochastic analogue of generalization of Kingman's Theorem given by Derriennic), and  recently by  A. Karlsson and Margulis \cite{KM99} (for ergodic measure preserving transformations).

Here, we will show a Kingman-like Theorem for an arbitrary finite measure  assuming some conditions. This theorem was inspired by the proof of Kingman's Theorem given by Avila and Bochi \cite{AB09}.

Generalisations of Birkhoff's Theorem were proved by E.Hopf \cite{Ho37} (for infinite measure preserving transformations), J. Aaronson \cite[Theorem 2.4.2]{AA97} (for conservative ergodic measure transformations),  W. Hurewicz \cite{Hu44} (for conservative nonsingular transformations where the observables are defined by means of Radon-Nykodim Theorem and the measure can be finite or infinite),  R. Chacon, D.Ornsten \cite{ChaOr60} (for Markov operators),  M. Carvalho and F. Moreira \cite{CM12} (for half-invariant measures),  M. Carvalho and F. Moreira \cite{CM18} (for ultralimits by means of ultrafilters), and recently M. Lenci and S. Munday \cite{LM18} (for conservative, ergodic, infinite-measure preserving dynamical systems) and, in the context of random walks, D. DolgoPyat, M. Lenci and P. N\'andori \cite{DLN20} proved strong laws of a large number of global observables.

As a consequence of our Kingman-like Theorem, we formulated  a version of  Birkhoff's Theorem for bounded observables and finite measures. Our result are not contemplated by previous work:

($a$) in \cite{Hu44}, Hurewicz worked in context of conservative transformations and bounded  observables  defined by means of Radon-Nykodim Theorem;

($b$) in \cite[Theorem 1.2]{CM12}, Carvalho and Moreira showed that every finite and half-invariant measure is an invariant measure, and our theorem was proved for a finite arbitraty measure;

($c$) in \cite{CM18}, Carvalho and Moreira showed that the Birkhoff's Theorem holds for each non-principal ultrafilter, so for this Theorem to imply our result it is necessary that the value of integral be the same for each non-principal ultrafilter, however it is not clear how to compute this, because the ultrafilters are obtained by Zorn's Lemma, and therefore we do not have an expression for these ultrafilters.

An interesting consequence of our result is the following.  Let $X:M\times \mathbb{R} \to M$ be a continuous flow, and $M$ to be a compact metric space. Consider $X_{t}:M\to M$ given by $X_{t}(x)= X(t,x)$, and $f_{t}:M \to M$ defined by $f_{t}=X_{t}$.  Suppose that $\varphi:M \to \mathbb{R}$ is a continuous function, and fix $x \in M$. If the following inequality 
\begin{align} \limsup\limits_{n} \frac{1}{n} \int_{0}^{n} \varphi \circ f_{t} (y_{x}) dt  \leq    \liminf\limits_{n}  \frac{1}{n} \int_{0}^{n} \varphi \circ f_{t} (x) dt 
\end{align} 
holds for all $y_{x} \in \omega(x)$, then the limit  $\lim \limits_{T\to \infty}  \frac{1}{T}  \int_{0}^{T} \varphi \circ f_{t}(x) dt$  exists. 

We use this to show that for some continuous observables the Birkhoff's limit exists in an example of Bowen.

\section{Statements of main results} \label{sec:statement-result1}

First of all, we introduce some definitions and notations that will be appear on text. Let $(\varphi_{n})_{n}$ be a sequence of  measurable functions where  $\varphi_{n}: M \to \mathbb{R}$ for each $n$ in $\mathbb{N}$. We say that $(\varphi_{n})_{n}$ is a subadditive sequence for $f$ if $\varphi_{m+n} \leq \varphi_{m} + \varphi_{n} \circ f^{m}$ for all $m,n \geq 1$.

We consider a function  $\varphi_{-}: M \to [-\infty, \infty]$  given by $\varphi_{-}(x) = \liminf\limits_{n} \frac{\varphi_{n}(x)}{n}$. For each  $\varepsilon >0$ fixed and $k \in \mathbb{N}$ we define

\begin{center}
$E_{k}^{\varepsilon}=\{x \in M:\varphi_{j}(x) \leq j (\varphi_{-}(x)+\varepsilon)$ for some  $j \in \{1,...,k\}\}$.
\end{center}

Note that $E^{\varepsilon}_{k} \subseteq E^{\varepsilon}_{k+1}$ and  $M= \bigcup\limits_{k=1}^{\infty} E^{\varepsilon}_{k}$.

\begin{maintheorem}\label{mtheo0a}
Let $(M, \mathcal{A}, \mu )$  be a measure space, $f: M \to M$   be a measurable function,  $\mu$ be a finite measure. Suppose that  $(\varphi_{n})_{n}$ is a subadditive sequence for $f$ such that $\varphi_{1} \leq \beta$ for some $\beta \in \mathbb{R}$. If the following conditions are satisfied:
\begin{itemize}

\item[($a$)] for all $j \in \mathbb{N}$ we have that $\varphi_{-}(f^{j}(x)) = \varphi_{-}(x)$  $\mu-$almost everywhere $x$ in $M$;

\item[($b$)]  $\lim\limits_{k \to \infty}\limsup\limits_{n }\frac{1}{n}\sum\limits_{i=0}^{n-k-1}  \mu(f^{-i}(M\setminus E_{k}^{\frac{1}{\ell}})) =0$ for each $\ell \in \mathbb{N} \setminus \{0\}$.

\end{itemize}

Then   $ \int \varphi_{-} d\mu =\inf\limits_{n}   \frac{1}{n}\int \varphi_{n} d\mu $. Moreover, if there exists $\gamma >0$ such that for all $n >0$, $\frac{\varphi_{n}}{n} \geq -\gamma $ then $\int \varphi_{-} d\mu  = \inf\limits_{n}   \frac{1}{n}\int \varphi_{n} d\mu = \lim \limits_{n}   \frac{1}{n}\int \varphi_{n} d\mu $.

\end{maintheorem}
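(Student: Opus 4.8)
The plan is to adapt the Avila--Bochi proof of Kingman's theorem, replacing the use of invariance of $\mu$ by the two hypotheses (a) and (b). The key inequality to establish is $\int \varphi_- \, d\mu \geq \inf_n \frac1n \int \varphi_n \, d\mu$; the reverse inequality $\int \varphi_- \, d\mu \leq \inf_n \frac1n \int \varphi_n \, d\mu$ should follow more easily from subadditivity together with Fekete's lemma applied to $a_n = \int \varphi_n \, d\mu$ (which is subadditive in $n$ since $\int \varphi_{m+n} \, d\mu \leq \int \varphi_m \, d\mu + \int \varphi_n \circ f^m \, d\mu$, but here one must be careful because without invariance $\int \varphi_n \circ f^m \, d\mu$ need not equal $\int \varphi_n \, d\mu$). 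So the actual argument will likely run through the truncations $\varphi_-^N = \max\{\varphi_-, -N\}$ and the quantity $\liminf_n \frac1n \int \varphi_n \, d\mu$ directly.

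The main step is the following. Fix $\epsilon > 0$ and $N \in \mathbb{N}$, and work with the truncated function $\psi = \varphi_-^N + \epsilon$, which is bounded. Using the definition of $E_k^\epsilon$, for $x \in E_k^\epsilon$ there is some $j(x) \in \{1,\dots,k\}$ with $\varphi_{j(x)}(x) \leq j(x)\psi(x)$. One builds a greedy "stopping-time" decomposition of the orbit segment $x, f(x), \dots, f^{n-1}(x)$: starting from $i=0$, if $f^i(x) \in E_k^\epsilon$ we jump ahead by $j(f^i(x)) \leq k$ steps and use subadditivity to bound the corresponding block of $\varphi$ by $j(f^i(x)) \psi(f^i(x))$; if $f^i(x) \notin E_k^\epsilon$ (or we are within the last $k$ steps) we advance by one step, paying the uniform bound $\varphi_1 \leq \beta$. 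Telescoping via $\varphi_n \leq \sum (\text{blocks})$ and integrating, one gets
\begin{equation*}
\frac1n \int \varphi_n \, d\mu \leq \frac1n \int \sum_{i \in \text{(good jumps)}} j(f^i(x))\psi(f^i(x)) \, d\mu + \frac{\beta}{n}\sum_{i=0}^{n-k-1} \mu(f^{-i}(M \setminus E_k^\epsilon)) + \frac{(k+\beta^+)\,\mu(M)}{n}.
\end{equation*}
Hypothesis (a) is what lets one replace each $\int \psi(f^i(x))\,d\mu$-type term coming from the good jumps by $\int \psi \, d\mu$ up to controlled error (since $\varphi_-\circ f^i = \varphi_-$ a.e., hence $\psi \circ f^i = \psi$ a.e.), and hypothesis (b) with $\epsilon = 1/\ell$ is exactly what kills the middle term after taking $\limsup_n$ and then $k \to \infty$. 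Letting $n \to \infty$, then $k \to \infty$, then $N \to \infty$ and $\epsilon = 1/\ell \to 0$, and using monotone/dominated convergence for the truncations $\varphi_-^N \uparrow \varphi_-$ (here $\varphi_- \leq \varphi_1 \leq \beta$ bounds it above, and the lower truncation handles the possibly $-\infty$ part), one obtains $\inf_n \frac1n \int \varphi_n \, d\mu \leq \int \varphi_- \, d\mu$.

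For the reverse inequality, Fatou's lemma gives $\int \varphi_- \, d\mu = \int \liminf_n \frac{\varphi_n}{n}\, d\mu \leq \liminf_n \frac1n \int \varphi_n \, d\mu$; combined with the fact that $\frac1n \int \varphi_n\, d\mu$ need not converge in general, one only gets $\int \varphi_- \, d\mu \leq \liminf_n \frac1n\int\varphi_n\,d\mu$. But Fatou requires a uniform integrable lower bound on $\varphi_n/n$; since $\varphi_1 \leq \beta$ only gives an upper bound, one applies the reverse Fatou (using $\varphi_n/n \leq$ something integrable, which follows from iterating subadditivity: $\varphi_n \leq \sum_{i=0}^{n-1}\varphi_1\circ f^i \leq n\beta$, so $\varphi_n/n \leq \beta$) to get $\limsup_n \frac1n \int \varphi_n \, d\mu \leq \int \varphi_- \, d\mu$ — wait, that needs $\limsup \int \leq \int \limsup$, and $\limsup \varphi_n/n \geq \varphi_-$, so this does not directly close. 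The clean route is: the first part of the proof already showed $\inf_n \frac1n\int\varphi_n\,d\mu \leq \int\varphi_-\,d\mu$, and reverse Fatou (legitimate since $\varphi_n/n \leq \beta \in L^1(\mu)$ as $\mu$ is finite) gives $\limsup_n \frac1n\int\varphi_n\,d\mu \leq \int \limsup_n \frac{\varphi_n}{n}\,d\mu$, which is not what we want either; instead note $\int \varphi_- \, d\mu \leq \inf_n\frac1n\int\varphi_n\,d\mu$ fails in general, so the theorem's assertion $\int\varphi_-\,d\mu = \inf_n\frac1n\int\varphi_n\,d\mu$ must rely on showing both $\leq$ and $\geq$: the $\leq$ is the main argument above, and for $\geq$ one uses that $\varphi_-$ is, up to the hypotheses, a "subinvariant" lower bound — concretely, $\int\varphi_-\,d\mu \geq \inf_n\frac1n\int\varphi_n\,d\mu$ should come from the subadditive bound $\int\varphi_n\,d\mu \geq n\int\varphi_-\,d\mu$ being false without invariance, so in fact one applies Fatou to get $\int\varphi_-\,d\mu \leq \liminf\frac1n\int\varphi_n\,d\mu$ and separately the main argument for the other direction, concluding equality of $\inf$ with $\int\varphi_-$ once one checks $\liminf = \inf$ here.

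For the "moreover" part, the extra hypothesis $\varphi_n/n \geq -\gamma$ provides a uniform integrable lower bound, so Fatou applies in the standard form: $\int \varphi_- \, d\mu \leq \liminf_n \frac1n \int \varphi_n \, d\mu$. Together with $\inf_n \frac1n\int\varphi_n\,d\mu \leq \int\varphi_-\,d\mu \leq \liminf_n \frac1n\int\varphi_n\,d\mu \leq \limsup_n\frac1n\int\varphi_n\,d\mu$ and the obvious $\liminf \leq \limsup$, one squeezes to get that $\lim_n \frac1n\int\varphi_n\,d\mu$ exists and equals the common value $\inf_n\frac1n\int\varphi_n\,d\mu = \int\varphi_-\,d\mu$. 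The main obstacle I anticipate is making the greedy stopping-time decomposition bookkeeping precise while correctly tracking how hypothesis (a) converts the orbital averages of $\psi$ back into $\int\psi\,d\mu$ — in the classical proof invariance does this instantly, and here one must verify that measurability of $x \mapsto j(x)$ and the a.e.\ identity $\psi\circ f^i = \psi$ suffice, and that the boundary terms (first/last $k$ coordinates, and the non-$E_k^\epsilon$ coordinates) are controlled exactly by hypothesis (b) and the $\varphi_1 \leq \beta$ bound.
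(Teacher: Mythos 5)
Your main estimate is sound and is essentially the paper's own argument: the greedy stopping-time decomposition you describe is exactly Lemma~1 of Avila--Bochi, which the paper invokes as Lemma~\ref{lema1a}; hypothesis $(a)$ is used in the same way (it makes $\varphi_{-}+\varepsilon$ literally invariant along orbits $\mu$-a.e., so the good-jump contributions integrate to $\int\psi\,d\mu$ up to boundary terms), and hypothesis $(b)$ kills the remaining error after taking $\limsup_{n}$ and then $k\to\infty$. The one structural difference is where the truncation sits: you truncate $\varphi_{-}$ at $-N$ inside the estimate, whereas the paper first proves Theorem~\ref{mtheo11a} under the extra hypothesis $\varphi_{n}/n\ge-\gamma$ and then deduces the general statement by truncating the whole sequence, $\varphi_{n}^{k}=\max\{\varphi_{n},-kn\}$, checking via Lemma~\ref{lemma-assumpa} (in particular $E_{r}^{\varepsilon}\subseteq G_{r}^{\varepsilon}$) that the truncations still satisfy $(a)$ and $(b)$, and passing to the limit by monotone convergence together with the interchange $\inf_{k}\inf_{n}=\inf_{n}\inf_{k}$. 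Either truncation delivers $\limsup_{n}\frac{1}{n}\int\varphi_{n}\,d\mu\le\int\varphi_{-}\,d\mu$, hence $\inf_{n}\frac{1}{n}\int\varphi_{n}\,d\mu\le\int\varphi_{-}\,d\mu$, which is the direction your proposal actually establishes.

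The genuine gap is the opposite inequality, $\int\varphi_{-}\,d\mu\le\inf_{n}\frac{1}{n}\int\varphi_{n}\,d\mu$, that is, $\int\varphi_{-}\,d\mu\le\frac{1}{n}\int\varphi_{n}\,d\mu$ for \emph{every fixed} $n$. Your third paragraph circles around this and ends with ``once one checks $\liminf=\inf$ here,'' which is precisely the unproven step: Fatou (legitimate only under the lower bound $-\gamma$) gives $\int\varphi_{-}\,d\mu\le\liminf_{n}\frac{1}{n}\int\varphi_{n}\,d\mu$, and since $\liminf\ge\inf$ this says nothing about the infimum; the classical shortcut $\lim=\inf$ via Fekete applied to $a_{n}=\int\varphi_{n}\,d\mu$ is unavailable for exactly the reason you yourself note, namely that $a_{n}$ need not be subadditive without invariance of $\mu$. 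The paper's route is: in the bounded-below case, combine Fatou ($\int\varphi_{-}\,d\mu\le L$ for every accumulation point $L$ of the bounded sequence $\frac{1}{n}\int\varphi_{n}\,d\mu$) with the greedy bound ($L\le\int\varphi_{-}\,d\mu$) to conclude that the limit exists and equals $\int\varphi_{-}\,d\mu$, identify this common value with $\inf_{n}\frac{1}{n}\int\varphi_{n}\,d\mu$, and only then transport the $\inf_{n}$ identity to the general case through the interchange of infima and monotone convergence. If you adopt this structure you must still justify the identification $\lim_{n}\frac{1}{n}\int\varphi_{n}\,d\mu=\inf_{n}\frac{1}{n}\int\varphi_{n}\,d\mu$ in the bounded case (the paper asserts it at the end of the proof of Theorem~\ref{mtheo11a} without further argument), so either supply a direct proof that $\int\varphi_{-}\,d\mu\le\frac{1}{n}\int\varphi_{n}\,d\mu$ for each $n$ using $(a)$ and $(b)$, or weaken what you claim in the general (not bounded below) case. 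As written, your proposal does not prove the first assertion of the theorem.
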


Our goal is to provide a Kingman-like Theorem for an arbitrary measure assuming only the conditions $(a)$ and $(b)$. Moreover, we obtain the convergence of integrals even without a subadditive sequence of real numbers given by Fekete's Lemma (or same version of this result) as is usual when we work with invariant measures.

Let $(M, \mathcal{A}, \mu )$  be a measure space, $f: M \to M$  be a measurable transformation,  $\mu$  be a probability measure. Let $\varphi: M \to \mathbb{R}$ be a  measurable function, we consider $(\varphi_{n})_{n}$ the additive sequence for $f$  given by $\varphi_{n} := \sum\limits_{j=0}^{n-1} \varphi \circ f^{j}$ for each $n$ in $\mathbb{N}$,  and  $\varphi_{-},\varphi_{+}$ the functions defined from $M$ to $[-\infty, \infty]$ given by $\varphi_{-}(x) = \liminf\limits_{n} \frac{\varphi_{n}(x)}{n}$, and $\varphi_{+}(x) = \limsup\limits_{n} \frac{\varphi_{n}(x)}{n}$.

Note that for every bounded function $\varphi: M \to \mathbb{R}$ we  have  that  $\varphi_{-}(f^{j}(x)) = \varphi_{-}(x)$  $\mu-$almost everywhere $x$ in $M$ for      all $j \in \mathbb{N}$.

\begin{remark}
Under the the same hypotheses of Theorem \ref{mtheo0a} with condition $(b)$ replaced by condition $(c)$, that says
\begin{itemize}
\item[$(c)$]$ \mu(f^{-i}(M\setminus E_{k}^{\varepsilon})) \leq \mu(M\setminus E_{k}^{\varepsilon}) $ for all $i \in \mathbb{N}$, for any $k\in \mathbb{N}$, and $\varepsilon>0$,
\end{itemize}

we obtain the conclusion of Theorem \ref{mtheo0a} since the condition $(c)$ implies the condition $(b)$.
\end{remark}

We say that an observable $\varphi$ satisfies hypothesis $(c)$ if for all $i,k \in \mathbb{N}$ and $\varepsilon>0$, the following inequality $ \mu(f^{-i}(M\setminus E_{k}^{\varepsilon})) \leq \mu(M\setminus E_{k}^{\varepsilon}) $  holds when we consider $(\varphi_{n})_{n}$ an additive sequence for $f$.  We observe that if the  measure $\mu$ is an invariant measure, then every observable satisfies  hypothesis $(c)$.

Since every bounded observable satisfies hypothesis $(a)$, we deduce  Birkhoff's Theorem  for finite measures and bounded observables  as follows.

\begin{maincorollary}\label{corA}
Let $(M, \mathcal{A}, \mu )$  be a measure space, $f: M \to M$  be a measurable transformation,  $\mu$  be a probability measure. If $\varphi: M \to \mathbb{R}$ is a bounded  measurable  function that satisfies the hypothesis $(b)$ or $(c)$. Then

\begin{center}
 $\int \varphi_{-} d\mu = \lim \limits_{n}  \frac{1}{n}\int \sum\limits_{j=0}^{n-1} \varphi \circ f^{j} d\mu  =  \inf \limits_{n}  \frac{1}{n}\int \sum\limits_{j=0}^{n-1} \varphi \circ f^{j} d\mu  $.
\end{center}
\end{maincorollary}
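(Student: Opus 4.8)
The plan is to deduce Corollary~\ref{corA} directly from Theorem~\ref{mtheo0a}, applied to the additive (Birkhoff) sequence $\varphi_n := \sum_{j=0}^{n-1}\varphi\circ f^j$; so the task reduces to checking that every hypothesis of the theorem, including the ``moreover'' part, holds for this particular sequence.

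First I would note that an additive sequence is subadditive: $\varphi_{m+n} = \varphi_m + \varphi_n\circ f^m$ holds with equality, so the defining inequality $\varphi_{m+n}\le\varphi_m+\varphi_n\circ f^m$ is trivially satisfied. Since $\varphi$ is bounded, fix $C>0$ with $|\varphi|\le C$ on $M$. Then $\varphi_1=\varphi\le C$, which supplies the bound $\varphi_1\le\beta$ with $\beta=C$; and $\varphi_n=\sum_{j=0}^{n-1}\varphi\circ f^j\ge -nC$, so $\tfrac{\varphi_n}{n}\ge -C$, giving the extra lower-bound hypothesis of the ``moreover'' part with $\gamma:=C+1>0$. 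As $\mu$ is a probability measure, it is finite, so the standing assumptions of the theorem are in place.

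It remains to verify conditions $(a)$ and $(b)$. Condition $(b)$ is assumed outright in the corollary; if instead $(c)$ is assumed, the Remark preceding the statement already records that $(c)\Rightarrow(b)$, so $(b)$ holds in both cases. For $(a)$ I would start from the identity $\tfrac{\varphi_n(x)}{n}=\tfrac{\varphi(x)}{n}+\tfrac{n-1}{n}\cdot\tfrac{\varphi_{n-1}(f(x))}{n-1}$; since $\varphi$ is bounded, $\tfrac{\varphi(x)}{n}\to 0$ while $\tfrac{n-1}{n}\to 1$, so taking $\liminf$ as $n\to\infty$ yields $\varphi_-(x)=\varphi_-(f(x))$ for every $x\in M$, and iterating gives $\varphi_-(f^j(x))=\varphi_-(x)$ for all $j$ and all $x$, in particular $\mu$-almost everywhere. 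This is precisely the observation, already stated in the text, that every bounded observable satisfies hypothesis $(a)$.

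Once all hypotheses are checked, the ``moreover'' conclusion of Theorem~\ref{mtheo0a} gives $\int\varphi_-\,d\mu=\inf_n\tfrac1n\int\varphi_n\,d\mu=\lim_n\tfrac1n\int\varphi_n\,d\mu$, which is exactly the assertion of the corollary. I do not expect a genuine obstacle here: this is essentially a bookkeeping consequence of the main theorem, and the only point requiring a short argument is condition $(a)$, which the boundedness of $\varphi$ settles at once.
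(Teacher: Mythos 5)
Your proposal is correct and follows exactly the route the paper intends: the paper offers no separate proof of Corollary~\ref{corA}, deducing it from Theorem~\ref{mtheo0a} via the observation that every bounded observable satisfies hypothesis $(a)$ and the remark that $(c)$ implies $(b)$. Your verification of the remaining hypotheses (subadditivity with equality, $\varphi_1\le\beta$, the uniform lower bound for the ``moreover'' clause, and the invariance of $\varphi_-$ along orbits) fills in precisely the bookkeeping the paper leaves implicit.
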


\begin{remark}
In \cite{CM12}, Carvalho and Moreira introduced the notion of half-invariant measure $\mu$, that means that \begin{equation}\label{eq:CM} \mu(f^{-1}(B)) \leq \mu (B)\end{equation} for all measurable set $B$. Note that this implies that $(\mu(f^{-j}(B)))_{j \in \mathbb{N}}$ is a decreasing sequence. The authors showed that for any bounded observable $\varphi:M \to \mathbb{R}$ and a half-invariant measure,  the limit $\lim \limits_{n}  \frac{1}{n} \sum\limits_{j=0}^{n-1} \varphi \circ f^{j} (x) $ exists  for $\mu$ a.e. point $x$ in $M$.  Here,  Corollary \ref{corA}  tell us that  condition \ref{eq:CM} can be relaxed to consider only  the sets of the type $M\setminus E_{k}^{\varepsilon}$ for any $\varepsilon>0$ and $k\in \mathbb{N}$.

\end{remark}

Let $(W,d)$ be a metric space,    $g: W \to W$  be a function, and $x \in W$.  The set $\mathcal{O}^{+}x $ is the  \emph{ forward orbit of $x$}, and it is given by $\mathcal{O}^{+}x :=\{g^{n}(x)\}_{n \in \mathbb{N}}$. A point $x \in W$ is a \emph{ periodic point} if there exists $m \in \mathbb{N}$ such that $g^{m}x =x$. More generally, we say that a point $x \in W$ is \emph{eventually periodic} if there exists $j_{0} \in \mathbb{N}$ such that $g^{j_{0}}x$ is a periodic point.

Let  $S$ be a subset of $W$, and let $g: W\to W$ be  a continuous function. The $\omega$-limit of $S$, denoted by $\omega(S,g)$, is the set of points $y \in W$ for which there are $z \in S$ and a strictly increasing sequence of natural number $\{n_{k}\}_{k \in \mathbb{N}}$ such that $g^{n_{k}} z \to y$ as $k \to \infty$. Note that $\omega(S,g) = \bigcup\limits_{z \in S} \omega(\{z\},g)$.

Let us mention one important consequence of Corollary \ref{corA}.

\begin{corollary}\label{teo:gooda}
Let $(M, \mathcal{A})$  be a measurable space for $M$ metric space,  $f: M \to M$  be a measurable transformation, and $\varphi: M \to \mathbb{R}$ be a bounded  measurable  function.  If one of the following conditions is true

\begin{itemize}
\item[($i$)]  $\lim\limits_{k \to \infty}\limsup\limits_{n }\frac{1}{n}\sum\limits_{i=0}^{n-k-1}  \delta_{x}(f^{-i}(M\setminus E_{k}^{\frac{1}{\ell}})) =0$ for each $\ell \in \mathbb{N} \setminus \{0\}$ where  $\delta_{x}$ the Dirac measure of point $x \in M$;

\item[($ii$)] Suppose that there exists $x \in M$ such that  for any $\varepsilon>0$  there exist $j_{\varepsilon},k_{\varepsilon} \in \mathbb{N}$ satisfying that  $f^{j} (x) \in E^{\varepsilon}_{k_{\varepsilon}} $ for $j \geq j_{\varepsilon}$;

\item[($iii$)] If $M$ is a compact metric space, and there exists $x \in M$ such that for any $\varepsilon >0$ there exists $k_{\varepsilon} \in \mathbb{N}$ satisfying that $\omega(\{x\},f)$ is contained in the interior of $E^{\varepsilon}_{k_{\varepsilon}}$;
\item[($iv$)] Suppose that $M$ is a compact metric space, $f,\varphi, \varphi_{-}$ are continuous functions, and  $\omega(\{x\},f)$ is a finite set for some $x \in M$.

\end{itemize}

Then the limit  $   \lim \limits_{n\to \infty}  \frac{1}{n}  \sum\limits_{j=0}^{n-1} \varphi \circ f^{j}(x) $ exists.

\end{corollary}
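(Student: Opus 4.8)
The plan is to derive each of the four conditions $(i)$–$(iv)$ as a special case of Corollary \ref{corA} applied to the Dirac measure $\mu = \delta_x$. The key observation is that $\delta_x$ is a probability measure on $(M,\mathcal A)$ (taking $\mathcal A$ to be, say, the Borel $\sigma$-algebra), and for this measure the Birkhoff integrals collapse to orbit averages: $\int \sum_{j=0}^{n-1}\varphi\circ f^j\, d\delta_x = \sum_{j=0}^{n-1}\varphi(f^j(x))$, so $\frac1n\int \sum_{j=0}^{n-1}\varphi\circ f^j\, d\delta_x = \frac1n\sum_{j=0}^{n-1}\varphi(f^j(x))$. Since $\varphi$ is bounded it satisfies hypothesis $(a)$ automatically (this is noted in the text), so by Corollary \ref{corA} it suffices to verify hypothesis $(b)$ (or $(c)$) for the measure $\delta_x$; once that holds, Corollary \ref{corA} gives $\int \varphi_-\, d\delta_x = \lim_n \frac1n\sum_{j=0}^{n-1}\varphi(f^j(x))$, and in particular the limit exists.

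First I would handle $(i)$: this is literally hypothesis $(b)$ written out for $\mu=\delta_x$, so there is nothing to prove beyond invoking Corollary \ref{corA}. Next, for $(ii)$, I would show $(ii)\Rightarrow(i)$. Fix $\ell$ and set $\varepsilon = 1/\ell$; by hypothesis there are $j_\varepsilon, k_\varepsilon$ with $f^j(x)\in E^\varepsilon_{k_\varepsilon}$ for all $j\ge j_\varepsilon$. Using $E^\varepsilon_{k_\varepsilon}\subseteq E^\varepsilon_k$ for $k\ge k_\varepsilon$, we get $f^j(x)\in E^\varepsilon_k$ for all $j\ge j_\varepsilon$ and all $k\ge k_\varepsilon$, i.e.\ $\delta_x(f^{-i}(M\setminus E^\varepsilon_k)) = 0$ whenever $i\ge j_\varepsilon$. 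Hence for $k\ge k_\varepsilon$ the sum $\sum_{i=0}^{n-k-1}\delta_x(f^{-i}(M\setminus E^\varepsilon_k))$ has at most $j_\varepsilon$ nonzero terms (each at most $1$), so $\frac1n\sum_{i=0}^{n-k-1}\delta_x(f^{-i}(M\setminus E^\varepsilon_k))\le j_\varepsilon/n \to 0$ as $n\to\infty$. Thus $\limsup_n(\cdots) = 0$ for every $k\ge k_\varepsilon$, so the limit over $k$ is $0$, giving $(i)$.

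Then $(iii)\Rightarrow(ii)$: if $M$ is compact and $\omega(\{x\},f)\subseteq \inte E^\varepsilon_{k_\varepsilon}$, I would argue that the forward orbit eventually enters $\inte E^\varepsilon_{k_\varepsilon}$ and stays in $E^\varepsilon_{k_\varepsilon}$. Indeed, if not, there is a subsequence $f^{j_m}(x)\notin E^\varepsilon_{k_\varepsilon}$; by compactness pass to a convergent subsubsequence with limit $y\in\omega(\{x\},f)$, and since the complement of $\inte E^\varepsilon_{k_\varepsilon}$ is closed, $y\notin\inte E^\varepsilon_{k_\varepsilon}$, contradicting $\omega(\{x\},f)\subseteq\inte E^\varepsilon_{k_\varepsilon}$. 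So some $j_\varepsilon$ works and $(ii)$ holds. Finally $(iv)\Rightarrow(iii)$: if $\omega(\{x\},f)=\{p_1,\dots,p_r\}$ is finite, continuity of $f$ forces each $p_s$ to be eventually periodic along the orbit structure of $\omega$ (in fact $\omega(\{x\},f)$ is forward-invariant, so it is a union of periodic cycles), and continuity of $\varphi$ and $\varphi_-$ lets me show each $p_s$ lies in $\inte E^\varepsilon_k$ for $k$ large: at a periodic point $p$ of period $q$ one computes $\varphi_-(p) = \frac1q\sum_{t=0}^{q-1}\varphi(f^t(p))$, so $\varphi_q(p) = q\,\varphi_-(p) < q(\varphi_-(p)+\varepsilon)$, meaning $p\in E^\varepsilon_q$; then using continuity of $\varphi_q$, $\varphi_-$ and $\varphi$ one finds a neighborhood of $p$ still contained in $E^\varepsilon_k$ for suitable $k$ (taking $k$ a common multiple of the periods and enlarging as needed), so $\omega(\{x\},f)\subseteq\inte E^\varepsilon_k$, which is $(iii)$.

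The main obstacle I anticipate is the chain $(iv)\Rightarrow(iii)$: one must be careful that $\omega(\{x\},f)$ being finite genuinely forces a nice periodic-orbit structure (finiteness plus forward invariance plus $f$ mapping $\omega$ onto $\omega$ does this, but the argument that $f|_{\omega}$ is a bijection of a finite invariant set needs the surjectivity of $f$ onto $\omega(\{x\},f)$, which follows from the standard $\omega$-limit properties), and that the openness assertion ``$p\in\inte E^\varepsilon_k$'' really follows from continuity of $\varphi$, $\varphi_-$, and the finitely many functions $\varphi_1,\dots,\varphi_k$ — the set $E^\varepsilon_k$ is a finite union $\bigcup_{j=1}^k\{\varphi_j \le j(\varphi_-+\varepsilon)\}$, and at $p$ the inequality holds strictly for $j=q$, so by continuity it holds on an open neighborhood. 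The other implications $(i)$, $(ii)\Rightarrow(i)$, $(iii)\Rightarrow(ii)$ are routine once the Dirac-measure reduction is in place.
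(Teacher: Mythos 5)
Your proposal is correct and follows the same overall skeleton as the paper: reduce to Corollary \ref{corA} with $\mu=\delta_{x}$ (item $(i)$ being exactly hypothesis $(b)$ for this measure), then chain $(iv)\Rightarrow(iii)\Rightarrow(ii)\Rightarrow(i)$. Two of your sub-arguments differ from the paper's, both to your advantage or at least harmlessly. For $(ii)\Rightarrow(i)$ you observe directly that $f^{j}(x)\in E^{\varepsilon}_{k_{\varepsilon}}\subseteq E^{\varepsilon}_{k}$ for $j\geq j_{\varepsilon}$ forces $\delta_{x}(f^{-j}(M\setminus E^{\varepsilon}_{k}))=0$ for $j\geq j_{\varepsilon}$, so the Ces\`aro sum is at most $j_{\varepsilon}/n$; the paper instead splits into the eventually periodic and non-eventually-periodic cases and proves a finiteness claim (its Claim 1) before reaching the same $j_{0}/n$ bound --- your version is cleaner and avoids an unnecessary case distinction. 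For $(iv)\Rightarrow(iii)$ you use the dynamical structure of a finite $\omega$-limit set (surjectivity of $f$ on $\omega(\{x\},f)$ in the compact case makes it a union of periodic cycles, and at a period-$q$ point the strict inequality $\varphi_{q}(p)<q(\varphi_{-}(p)+\varepsilon)$ plus continuity of $\varphi_{q}$ and $\varphi_{-}$ puts $p$ in $\operatorname{int}E^{\varepsilon}_{k}$), whereas the paper defines the open sets $\widehat{E^{\varepsilon}_{k}}=\{\varphi_{j}<j(\varphi_{-}+\varepsilon)\ \text{for some}\ j\leq k\}$, notes $M=\bigcup_{k}\widehat{E^{\varepsilon}_{k}}$, and covers the finite set $\omega(\{x\},f)$ by a single $\widehat{E^{\varepsilon}_{k_{\varepsilon}}}\subseteq\operatorname{int}E^{\varepsilon}_{k_{\varepsilon}}$. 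The paper's covering argument is slightly more economical since it never needs the periodic-cycle structure, but your route is valid provided you justify (as you indicate) that $f$ maps $\omega(\{x\},f)$ onto itself; both rely on the same continuity hypotheses.
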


We are going to obtain a version of item $(ii)$ of Corollary \ref{teo:gooda} for continuous flow on compact metric spaces.  Let $X:M\times \mathbb{R} \to M$ be a continuous flow, and $M$ to be a compact metric space. Consider $X_{t}:M\to M$ given by $X_{t}(x)= X(t,x)$, and $f_{t}:M \to M$ defined by $f_{t}=X_{t}$. Let $\varphi: M \to \mathbb{R}$ be a bounded  measurable  function and for each $x \in M$ denote the Dirac measure of point $x$ by $\delta_{x}$. We consider the following objects:

$\varphi_{*,-} (y) = \liminf\limits_{n \to \infty} \frac{1}{n} \int_{0}^{n} \varphi \circ f_{t} (y) dt$ for each $y \in M$;

$E^{*,\varepsilon}_{k_{\varepsilon}} = \{ y \in M:  \frac{1}{n} \int_{0}^{n} \varphi \circ f_{t} (y) dt \leq \varphi_{*,-} (y) + \varepsilon$ for some $n \in  \{1,\cdots,k\} \}$.

The  next result is the version of Corollary \ref{teo:gooda} for continuous flow on compact metric spaces.

\begin{corollary}\label{teo:gooda11}
Let $\varphi: M \to \mathbb{R}$ be a bounded  measurable  function, and fix $x \in M$. If  for any $\varepsilon>0$  there exist $t_{\varepsilon} \in \mathbb{R}$  and $k_{\varepsilon} \in \mathbb{N}$ satisfying that  $\delta_{x}(f_{-j}( E^{\varepsilon,*}_{k_{\varepsilon}})) =1$  for $j \geq t_{\varepsilon}$ and $j \in \mathbb{N}$, then the limit  $   \lim \limits_{T\to \infty}  \frac{1}{T}  \int_{0}^{T} \varphi \circ f_{t}(x) dt $  exists.
\end{corollary}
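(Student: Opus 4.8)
The plan is to reduce the statement to the already-established discrete-time Corollary \ref{teo:gooda}, applied to the time-one map of the flow together with a suitably time-averaged observable.

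First I would set $g:=f_1$ and define a new observable $\psi:M\to\mathbb{R}$ by $\psi(y):=\int_0^1\varphi\circ f_s(y)\,ds$. Since $\varphi$ is bounded and measurable and the flow $X$ is jointly continuous, the map $(s,y)\mapsto\varphi(f_s(y))$ is bounded and measurable, so by Fubini's theorem $\psi$ is a bounded measurable function with $\|\psi\|_\infty\le\|\varphi\|_\infty$. The flow identity $f_{s+j}=f_s\circ f_j$ gives $\psi\circ g^{j}(y)=\int_j^{j+1}\varphi\circ f_t(y)\,dt$, hence, for the additive sequence $\psi_n:=\sum_{j=0}^{n-1}\psi\circ g^{j}$ associated with $g$,
\[
\psi_n(y)\;=\;\int_0^n\varphi\circ f_t(y)\,dt\qquad\text{for all }n\in\mathbb{N},\ y\in M.
\]
In particular $\psi_-(y)=\liminf_n\tfrac1n\psi_n(y)=\varphi_{*,-}(y)$, and the level sets $E_k^{\varepsilon}$ built from $(\psi_n)_n$ and $g$ coincide exactly with the sets $E_k^{*,\varepsilon}$ introduced just before the statement.

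Next I would translate the hypothesis. The condition $\delta_x\big(f_{-j}(E^{*,\varepsilon}_{k_\varepsilon})\big)=1$ means $x\in f_{-j}(E^{*,\varepsilon}_{k_\varepsilon})=f_j^{-1}(E^{*,\varepsilon}_{k_\varepsilon})$, that is, $g^{j}(x)=f_j(x)\in E^{*,\varepsilon}_{k_\varepsilon}=E_k^{\varepsilon}$. Thus the hypothesis says that for every $\varepsilon>0$ there are $k_\varepsilon\in\mathbb{N}$ and a natural number $j_\varepsilon\ge t_\varepsilon$ (for instance $j_\varepsilon=\max\{\lceil t_\varepsilon\rceil,1\}$) such that $g^{j}(x)\in E_{k_\varepsilon}^{\varepsilon}$ for all integers $j\ge j_\varepsilon$. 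This is precisely condition $(ii)$ of Corollary \ref{teo:gooda} for the triple $(M,g,\psi)$, so that corollary yields the existence of
\[
L\;:=\;\lim_{n\to\infty}\frac1n\sum_{j=0}^{n-1}\psi\circ g^{j}(x)\;=\;\lim_{n\to\infty}\frac1n\int_0^n\varphi\circ f_t(x)\,dt .
\]

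It then remains to upgrade convergence along integer times to convergence along all real times, which uses only the boundedness of $\varphi$. For $T>0$ put $n=\lfloor T\rfloor$, so $0\le T-n<1$ and
\[
\frac1T\int_0^T\varphi\circ f_t(x)\,dt\;=\;\frac{n}{T}\cdot\frac1n\int_0^n\varphi\circ f_t(x)\,dt\;+\;\frac1T\int_n^T\varphi\circ f_t(x)\,dt .
\]
As $T\to\infty$ we have $n/T\to1$, the first summand tends to $L$, and the last summand is bounded in absolute value by $\|\varphi\|_\infty/T\to0$; hence $\tfrac1T\int_0^T\varphi\circ f_t(x)\,dt\to L$, which is the assertion. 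The argument is essentially a bookkeeping reduction, so I do not expect a deep obstacle; the points that require care are the measurability of the averaged observable $\psi$ (which rests on joint continuity of the flow together with Fubini) and the verification that the discrete level sets $E_k^{\varepsilon}$ attached to $(\psi_n)_n$ are indeed the sets $E_k^{*,\varepsilon}$ of the statement, so that the Dirac-mass hypothesis turns into the orbit condition $(ii)$ of Corollary \ref{teo:gooda}.
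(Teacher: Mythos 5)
Your proposal is correct and follows essentially the same route as the paper: both pass to the time-one map with the averaged observable $\psi(y)=\int_0^1\varphi\circ f_s(y)\,ds$, identify the discrete level sets with the sets $E^{*,\varepsilon}_{k}$ so that the Dirac-mass hypothesis becomes condition $(ii)$ of Corollary \ref{teo:gooda}, and then pass from integer to real times using the boundedness of $\varphi$. No substantive differences to report.
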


If  $\varphi$  is a continuous function on a compact metric space, we obtain an interesting criterion to provide the existence of Birkhoff's limit as follows.

\begin{maintheorem}\label{flowbirkhoff}
Suppose that $M$ is a compact metric space,  $\varphi:M \to \mathbb{R}$ is a continuous function, and fix $x \in M$. If  $ \limsup\limits_{n} \frac{1}{n} \int_{0}^{n} \varphi \circ f_{t} (y_{x}) dt  \leq    \liminf\limits_{n}  \frac{1}{n} \int_{0}^{n} \varphi \circ f_{t} (x) dt $ for all $y_{x} \in \omega(x)$, then  the limit  $   \lim \limits_{T\to \infty}  \frac{1}{T}  \int_{0}^{T} \varphi \circ f_{t}(x) dt $  exists.
\end{maintheorem}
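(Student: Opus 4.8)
The plan is to deduce this from Corollary \ref{teo:gooda11}, whose hypothesis we must verify under the stated $\limsup$--$\liminf$ domination condition along the $\omega$-limit. So fix $x\in M$ and set $L:=\liminf_n \frac1n\int_0^n \varphi\circ f_t(x)\,dt$. The goal is to show that for every $\varepsilon>0$ there are $t_\varepsilon\in\mathbb R$ and $k_\varepsilon\in\mathbb N$ so that $f_{-j}(\{x\})\in E^{\varepsilon,*}_{k_\varepsilon}$, i.e.\ $f_j(x)\in E^{\varepsilon,*}_{k_\varepsilon}$, for all integers $j\ge t_\varepsilon$; this is exactly the condition $\delta_x(f_{-j}(E^{\varepsilon,*}_{k_\varepsilon}))=1$. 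First I would record that, since $M$ is compact and $\varphi$ continuous, the family of functions $y\mapsto \frac1n\int_0^n\varphi\circ f_t(y)\,dt$ is uniformly bounded and, by uniform continuity of $(t,y)\mapsto\varphi(f_t(y))$ on $[0,1]\times M$ together with the cocycle/Chasles decomposition $\int_0^{n+m}=\int_0^n+\int_n^{n+m}$ and $\int_n^{n+m}\varphi\circ f_t(y)\,dt=\int_0^m\varphi\circ f_t(f_n(y))\,dt$, the "averaged" functions behave well under the dynamics; in particular $\varphi_{*,-}$ is $f_s$-invariant for every $s$, which is the flow analogue of hypothesis $(a)$.

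Next I would use the hypothesis to pin down $\varphi_{*,-}$ along $\omega(x)$. For any $y_x\in\omega(x)$ the assumed inequality gives $\varphi_{*,+}(y_x)\le L$; on the other hand, I claim $L\le \varphi_{*,-}(y_x)$ for at least one $y_x$, and more usefully, by a standard compactness argument (pick times $n_k\to\infty$ realizing the $\liminf$ $L$ for $x$, pass to a subsequence so that $f_{n_k}(x)\to y_x\in\omega(x)$, and compare $\frac1{n_k}\int_0^{n_k}\varphi\circ f_t(x)\,dt$ with averages along $y_x$ using the invariance/continuity from the previous step), one gets that the Birkhoff-type averages of $x$ are asymptotically controlled by those of points in $\omega(x)$. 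Combining $\varphi_{*,+}(y_x)\le L$ with the reverse bound yields that $\varphi_{*,-}$ is constant, equal to $L$, on the (nonempty, compact, invariant) set $\omega(x)$, and — the key point — that $\limsup_n\frac1n\int_0^n\varphi\circ f_t(x)\,dt\le L$, i.e.\ the upper average of $x$ itself does not exceed $L$. Since trivially $\liminf\le\limsup$, this forces the genuine limit $\lim_{T\to\infty}\frac1T\int_0^T\varphi\circ f_t(x)\,dt=L$ to exist.

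Actually, to route this cleanly through Corollary \ref{teo:gooda11} rather than reproving convergence by hand, I would instead argue: fix $\varepsilon>0$. For each $y\in\omega(x)$, since $\varphi_{*,-}(y)\le L$ and (by the hypothesis applied with $y_x=y$, noting $\omega(x)$ is invariant so $\omega(y)\subseteq\omega(x)$) the upper average of $y$ is $\le L$, there is $k(y)\in\mathbb N$ with $\frac1{k(y)}\int_0^{k(y)}\varphi\circ f_t(y)\,dt\le \varphi_{*,-}(y)+\varepsilon/2$, so $y\in E^{\varepsilon/2,*}_{k(y)}$; by continuity of the integrand this strict-ish inequality persists on a neighborhood $U_y$ of $y$, and one may arrange $y\in\operatorname{int}E^{\varepsilon,*}_{k(y)}$. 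Compactness of $\omega(x)$ gives a finite subcover, hence a single $k_\varepsilon:=\max k(y_i)$ and an open set $U\supseteq\omega(x)$ with $U\subseteq E^{\varepsilon,*}_{k_\varepsilon}$ (using $E^{\varepsilon,*}_k\subseteq E^{\varepsilon,*}_{k+1}$). Since $\omega(x)$ is the $\omega$-limit set, $f_j(x)\in U\subseteq E^{\varepsilon,*}_{k_\varepsilon}$ for all sufficiently large $j$, which is precisely the hypothesis $\delta_x(f_{-j}(E^{\varepsilon,*}_{k_\varepsilon}))=1$ for $j\ge t_\varepsilon$. Corollary \ref{teo:gooda11} then delivers the existence of $\lim_{T\to\infty}\frac1T\int_0^T\varphi\circ f_t(x)\,dt$.

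The main obstacle I anticipate is the step controlling the upper average of each $y\in\omega(x)$ by $L$ (so that $y$ lands in some $E^{\varepsilon,*}_k$): the hypothesis bounds $\limsup$ of averages at points of $\omega(x)$ by $L=\liminf$ at $x$, but to feed Corollary \ref{teo:gooda11} we need the neighborhood-stable membership $y\in\operatorname{int}E^{\varepsilon,*}_{k}$, which requires knowing $\varphi_{*,-}(y)\ge L-\varepsilon/2$ as well, or equivalently that the whole family of finite averages at $y$ is squeezed near $L$. Establishing this lower bound on $\varphi_{*,-}\!\restriction_{\omega(x)}$ — presumably via the invariance of $\varphi_{*,-}$ under the flow plus a recurrence argument inside the compact invariant set $\omega(x)$, or by iterating the hypothesis (points of $\omega(y)\subseteq\omega(x)$ again satisfy it) — is the delicate part; the rest is compactness and continuity bookkeeping.
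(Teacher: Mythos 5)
Your overall strategy --- verifying the hypothesis of Corollary \ref{teo:gooda11} --- is the same as the paper's, but your execution has a genuine gap exactly at the step you yourself flag as ``the delicate part'', and the repair you sketch for it is not the right one. The problem is the claim that $y\in\operatorname{int}E^{*,\varepsilon}_{k(y)}$ and hence $U\subseteq E^{*,\varepsilon}_{k_\varepsilon}$: membership in $E^{*,\varepsilon}_{k}$ is the condition $\frac{1}{n}\int_0^n\varphi\circ f_t(z)\,dt\le\varphi_{*,-}(z)+\varepsilon$ for some $n\le k$, and while the left-hand side is continuous in $z$, the function $\varphi_{*,-}$ is in general only measurable, so this is not an open condition and need not persist on a neighborhood of $y$. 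Trying to fix this by proving $\varphi_{*,-}\ge L-\varepsilon/2$ on $\omega(x)$ (your suggested recurrence argument, which you do not carry out) is both unproven and beside the point. What actually closes the gap is the observation that you only ever need membership for the orbit points $z=f_j(x)$, and for those $\varphi_{*,-}(f_j(x))=\varphi_{*,-}(x)=L$ by the flow-invariance of $\varphi_{*,-}$; so it suffices that the finite-time average $\frac{1}{k(y)}\int_0^{k(y)}\varphi\circ f_t(\cdot)\,dt$ be $\le L+\varepsilon$ on a neighborhood of each $y\in\omega(x)$, which follows from the hypothesis $\limsup_n\frac{1}{n}\int_0^n\varphi\circ f_t(y)\,dt\le L$ (choose $k(y)$ with the average at $y$ at most $L+\varepsilon/2$) together with continuity of the finite-time averages. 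With that correction your covering argument does go through.

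For comparison, the paper avoids the covering argument altogether and argues by contradiction: if the hypothesis of Corollary \ref{teo:gooda11} fails for some $\varepsilon>0$, one extracts integer times $j_{k_s}\to\infty$ with $f_{j_{k_s}}(x)\notin E^{*,\varepsilon}_{k_s}$ and $f_{j_{k_s}}(x)\to y_x\in\omega(x)$; non-membership together with the invariance $\varphi_{*,-}(f_j(x))=\varphi_{*,-}(x)$ gives $\frac{1}{n}\int_0^n\varphi\circ f_t(f_{j_{k_s}}(x))\,dt>\varphi_{*,-}(x)+\varepsilon$ for all $n\le k_s$, and letting $s\to\infty$ with $n=k_\ell$ fixed (continuity of the finite-time averages) yields $\limsup_n\frac{1}{n}\int_0^n\varphi\circ f_t(y_x)\,dt\ge\varphi_{*,-}(x)+\varepsilon$, contradicting the assumed inequality at $y_x$. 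This uses the hypothesis only at a single accumulation point and requires no lower bound on $\varphi_{*,-}$ along $\omega(x)$.
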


  We say that  $x \in M$ is a $2d$-point if for any $y_{x} \in \omega (x)$ we have that $\omega(y_{x})$ is a fixed point (i.e.,  there exists $q \in M$ such that $\omega(y_{x}) = \{q\}$ and $f_{t} (q) = q$  for all $t \in \mathbb{R}$). Define the fixed point set under $X$ by $\Fix X = \{q \in M : q$ is a fixed point$\}$.

Let $M$ be a compact metric space $M$, and  $\varphi:M \to \mathbb{R}$ be a continuous function. 

We finish this section presenting the next consequence of Theorem \ref{flowbirkhoff}.

\begin{corollary}\label{cor.app}
Suppose that $M$ is a compact metric space,  $\varphi:M \to \mathbb{R}$ is a continuous function, and take  a $2d$-point $x \in M$. Suppose that $\varphi$ satisfies that $\varphi|_{\omega(x)\cap \Fix X} \equiv \min \varphi$. Then the limit  $   \lim \limits_{T\to \infty}  \frac{1}{T}  \int_{0}^{T} \varphi \circ f_{t}(x) dt $  exists.
\end{corollary}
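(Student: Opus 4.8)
The plan is to deduce Corollary~\ref{cor.app} directly from Theorem~\ref{flowbirkhoff}, so the whole task reduces to verifying its hypothesis, namely that
\[
\limsup_{n} \frac{1}{n}\int_{0}^{n}\varphi\circ f_{t}(y_{x})\,dt \;\le\; \liminf_{n}\frac{1}{n}\int_{0}^{n}\varphi\circ f_{t}(x)\,dt
\]
for every $y_{x}\in\omega(x)$. First I would fix the $2d$-point $x$ and an arbitrary $y_{x}\in\omega(x)$. Since $\omega(x)$ is closed and $X$-invariant, the forward orbit of $y_{x}$ remains in $\omega(x)$, and hence $\omega(y_{x})\subseteq\omega(x)$. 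By the definition of a $2d$-point, $\omega(y_{x})=\{q\}$ for some $q\in\Fix X$; combining this with the previous inclusion gives $q\in\omega(x)\cap\Fix X$, so the standing assumption $\varphi|_{\omega(x)\cap\Fix X}\equiv\min\varphi$ yields $\varphi(q)=\min\varphi$.

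Next I would compute the left-hand side of the desired inequality. Because $M$ is compact and $\omega(y_{x})=\{q\}$ consists of a single point, the orbit $t\mapsto f_{t}(y_{x})$ must converge to $q$ as $t\to\infty$: otherwise there would exist $\epsilon>0$ and $t_{n}\to\infty$ with $d(f_{t_{n}}(y_{x}),q)\ge\epsilon$, and a convergent subsequence of $(f_{t_{n}}(y_{x}))_{n}$ would produce a point of $\omega(y_{x})$ distinct from $q$, a contradiction. By continuity of $\varphi$ it follows that $\varphi(f_{t}(y_{x}))\to\varphi(q)=\min\varphi$, so the Cesàro averages converge,
\[
\lim_{n}\frac{1}{n}\int_{0}^{n}\varphi\circ f_{t}(y_{x})\,dt=\min\varphi,
\]
and in particular $\displaystyle\limsup_{n}\frac{1}{n}\int_{0}^{n}\varphi\circ f_{t}(y_{x})\,dt=\min\varphi$.

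Finally, since $\varphi\ge\min\varphi$ everywhere on $M$, every average satisfies $\frac{1}{n}\int_{0}^{n}\varphi\circ f_{t}(x)\,dt\ge\min\varphi$, so $\liminf_{n}\frac{1}{n}\int_{0}^{n}\varphi\circ f_{t}(x)\,dt\ge\min\varphi$. Combining the last two displays gives precisely the inequality required by Theorem~\ref{flowbirkhoff}, which then guarantees that $\lim_{T\to\infty}\frac{1}{T}\int_{0}^{T}\varphi\circ f_{t}(x)\,dt$ exists. The only step carrying any content is the orbit convergence $f_{t}(y_{x})\to q$; I expect that, together with the bookkeeping ensuring $q\in\omega(x)$ (and not merely $q\in\overline{\mathcal{O}^{+}y_{x}}$, which is where closedness and invariance of $\omega(x)$ enter), to be the only mild obstacle — everything else is formal.
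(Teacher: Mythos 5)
Your proposal is correct and follows essentially the same route as the paper: reduce to Theorem~\ref{flowbirkhoff}, show that for each $y_{x}\in\omega(x)$ the time average converges to $\varphi(q)=\min\varphi$ (the paper delegates this to Lemma~\ref{lem.fixed.point}, which you prove inline via the orbit convergence $f_{t}(y_{x})\to q$), and bound $\liminf_{n}\frac{1}{n}\int_{0}^{n}\varphi\circ f_{t}(x)\,dt$ below by $\min\varphi$. The only difference is that you spell out the invariance/closedness bookkeeping giving $q\in\omega(x)\cap\Fix X$, which the paper asserts without comment.
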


In the next section, we show some application of our results.

\subsection{Application}

In an example of Bowen, on a compact subset of $\mathbb{R}^{2}$ denoted by $E_{B}$, if $(f_{t}(x))_{t\geq 0}$ converges to a cycle and $\varphi$ is a continuous function on the plane, by taking different values in the saddle points $A$ and $B$, the time average

\begin{center}
$   \lim \limits_{T\to \infty}  \frac{1}{T}  \int_{0}^{T} \varphi \circ f_{t}(x) dt $
\end{center}

\begin{flushleft}
does not exist. 

This means that in this example there is an open set of initial states such that the corresponding orbits define non-stationary time series (whenever one uses an observable which has different values in two saddle points).

\end{flushleft}

\begin{center}
\includegraphics[scale=0.2]{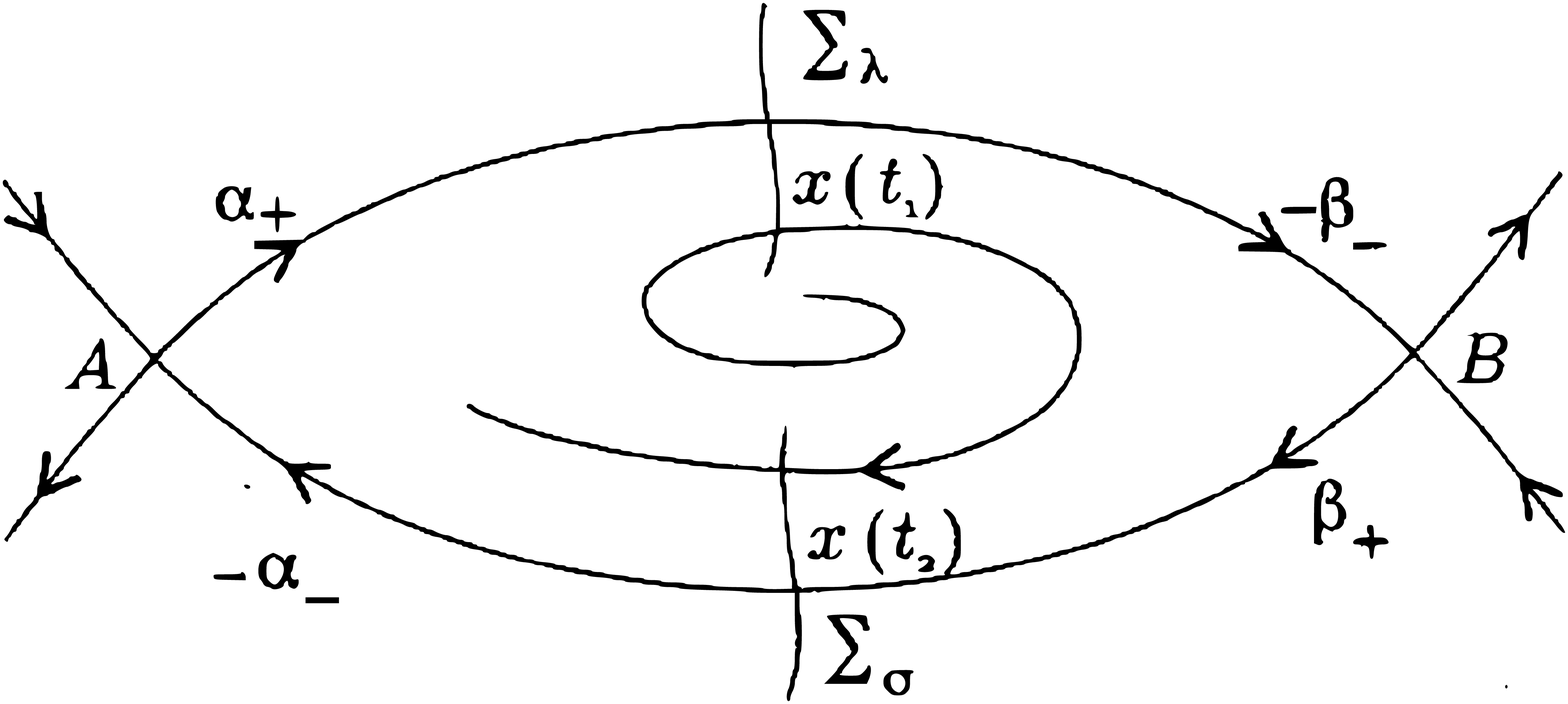}
\end{center}
\begin{center}
Figure 1: Phase portrait of the example by Bowen.
\end{center}

We denote, for the example given in figure 1, the expanding and contracting eigenvalues of the linearized vector field in $A$ by $\alpha_{+}$ and $\alpha_{-}$, and in $B$ by $\beta_{+}$ and $\beta_{-}$. We recall that the saddle points are denoted by $A$ and $B$. The condition which makes the cycle attracting is that the contracting eigenvalues dominate: $\alpha_{-}\beta_{-}> \alpha_{+}\beta_{+}$.

The modulus associated to the upper, respectively lower, saddle connection is denoted by $\lambda$, respectively $\sigma$. They are defined by
\begin{center}
 $\lambda = \alpha_{-}/\beta_{+}$ and $\sigma = \beta_{-}/\alpha_{+}$,
\end{center}
their values are positive and their products is bigger than 1, assuming the cycle to be attracting. 

Applying Corollary \ref{cor.app}, we get the following result.

\begin{corollary}\label{cor.bowen}
Suppose that $\varphi: E_{B} \to \mathbb{R}$ is a continuous function with $\varphi(A) =\varphi(B)= \min \varphi$, and $(f_{t}(x))_{t\geq 0}$ is an orbit converging to the cycle. Then the limit  $\lim \limits_{T\to \infty}  \frac{1}{T}  \int_{0}^{T} \varphi \circ f_{t}(x) dt$ exists.
\end{corollary}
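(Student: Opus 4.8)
The plan is to deduce this directly from Corollary \ref{cor.app} by verifying its two hypotheses: that $x$ is a $2d$-point, and that $\varphi|_{\omega(x)\cap\Fix X}\equiv\min\varphi$. The second is immediate once we identify $\omega(x)\cap\Fix X$, since by assumption $\varphi(A)=\varphi(B)=\min\varphi$ and the only fixed points of the Bowen flow are the two saddles $A$ and $B$ (the interior of the region bounded by the cycle contains a repelling equilibrium, but an orbit converging to the cycle never accumulates on it). So the bulk of the work is the first hypothesis.

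To see that $x$ is a $2d$-point I would argue as follows. By hypothesis $(f_t(x))_{t\ge 0}$ converges to the cycle $\Gamma$ consisting of the two saddles $A,B$ together with the two heteroclinic saddle connections joining them; hence $\omega(x)=\Gamma$. Now pick any $y_x\in\omega(x)=\Gamma$. There are two cases. If $y_x\in\{A,B\}$, then $y_x$ is itself a fixed point and $\omega(y_x)=\{y_x\}$, so the $2d$-condition holds trivially at $y_x$. If $y_x$ lies in the interior of one of the two saddle connections, then its forward orbit under the flow runs along that connection and converges to the saddle at its terminal end; thus $\omega(y_x)$ is a single fixed point (either $A$ or $B$). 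In every case $\omega(y_x)$ is a fixed point, so $x$ is a $2d$-point as required. With both hypotheses of Corollary \ref{cor.app} in force, that corollary yields the existence of $\lim_{T\to\infty}\frac1T\int_0^T\varphi\circ f_t(x)\,dt$, which is the assertion.

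The main obstacle — or rather the point requiring care — is the description of $\omega(x)$ and the verification that the forward orbit of every point of the cycle limits onto a single saddle. One must use the standard structure of the Bowen example: the heteroclinic cycle $\Gamma$ is an attracting invariant set (because the eigenvalue condition $\alpha_-\beta_->\alpha_+\beta_+$, equivalently $\lambda\sigma>1$, makes it asymptotically stable), so any orbit entering a small neighbourhood of $\Gamma$ has $\omega$-limit exactly $\Gamma$ and cannot split its $\omega$-limit set across more than the whole cycle; and each non-equilibrium point of $\Gamma$ sits on a connecting orbit whose $\alpha$- and $\omega$-limits are the two saddles. These are classical facts about the phase portrait in Figure 1, so I would state them with a brief justification from the local saddle dynamics rather than reprove them. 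Once these topological facts are granted, the reduction to Corollary \ref{cor.app} is purely formal.
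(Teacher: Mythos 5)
Your proposal is correct and follows exactly the route the paper intends: the paper derives Corollary \ref{cor.bowen} simply by invoking Corollary \ref{cor.app}, and you supply precisely the missing verifications (that $\omega(x)$ is the heteroclinic cycle, that every point of the cycle has a single saddle as its $\omega$-limit so $x$ is a $2d$-point, and that $\varphi$ attains its minimum on $\omega(x)\cap\Fix X=\{A,B\}$). Nothing further is needed.
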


We note that this result provides some information about the existence of Birkhoff's Limit if we take a continuous function.

\subsection{Organization of the text}
In the present section \ref{sec:statement-result1}, we provided preliminary definitions in order to present the statements of the main results together with the application. In Section \ref{sec:proofs}, we give the proofs of our results, divided into three subsections \ref{sec:teo:gooda}, \ref{proff.flows} and \ref{sec:mtheo0a}. In subsection \ref{sec:teo:gooda}, we proved the Corollary \ref{teo:gooda}. In subsection  \ref{proff.flows}, the Corollary \ref{teo:gooda11},  Theorem  \ref{flowbirkhoff} and Corollary \ref{cor.app}  are demonstrated.  Finally, in subsection \ref{sec:mtheo0a}, we showed Theorem \ref{mtheo0a}.

\subsection*{Acknowledgements}
\label{sec:acknowledgements}

This work is part of the PhD thesis of V. Coelho developed at the
Mathematics Department of Federal University of Bahia (UFBA) under
the guidance of L. Salgado. The authors would like to thank the
facilities provided by the Mathematics Institute, the PhD Program
and the financial support from several federal and state
agencies to this Research Program.

\section{Proof of results}\label{sec:proofs}

In this section, we present the proofs of our results.

\subsection{Proof of Corollary \ref{teo:gooda}}\label{sec:teo:gooda}

Let $(M, \mathcal{A})$ be a measurable space for $M$ compact metric space, and $f: M \to M$ be a measurable transformation, $\varphi: M \to \mathbb{R}$ be a bounded measurable function.

We are going to show that if one of the following conditions is true

\begin{itemize}
\item[($i$)] $\lim\limits_{k \to \infty}\limsup\limits_{n }\frac{1}{n}\sum\limits_{i=0}^{n-k-1}  \delta_{x}(f^{-i}(M\setminus E_{k}^{\frac{1}{\ell}})) =0$ for each $\ell \in \mathbb{N} \setminus \{0\}$ where  $\delta_{x}$ the Dirac measure of point $x \in M$;
\item[($ii$)] Suppose that there exists $x \in M$ such that  for any $\varepsilon>0$  there exist $j_{\varepsilon},k_{\varepsilon} \in \mathbb{N}$ satisfying that  $f^{j} x \in E^{\varepsilon}_{k_{\varepsilon}} $ for $j \geq j_{\varepsilon}$;
\item[($iii$)] If $M$ is a compact metric space, and there exists $x \in M$ such that for any $\varepsilon >0$ there exists $k_{\varepsilon} \in \mathbb{N}$ satisfying that $(\mathcal{O}^{+} x)'$ is contained in the interior of $E^{\varepsilon}_{k_{\varepsilon}}$;
\item[($iv$)] Suppose that $M$ is a compact metric space, $f,\varphi, \varphi_{-}$ are continuous functions, and  $(\mathcal{O}^{+} x)'$ is a finite set for some $x \in M$.
\end{itemize}

Then the limit $\lim \limits_{n\to \infty}  \frac{1}{n}  \sum\limits_{j=0}^{n-1} \varphi \circ f^{j}(x)$ exists.

\begin{proof}[Proof of Corollary \ref{teo:gooda}]

Fix $\varphi : M \to \mathbb{R}$, and consider $(\varphi_{n})_{n}$ to be the additive sequence for $f$  given by $\varphi_{n} := \sum\limits_{j=0}^{n-1} \varphi \circ f^{j}$ for each $n$ in $\mathbb{N}$. Consider  $\varphi_{-}: M \to \mathbb{R}$  given by $\varphi_{-}(w) = \liminf\limits_{n} \frac{1}{n} \sum\limits_{j=0}^{n-1} \varphi \circ f^{j} (w)$ for $w \in M$.

For each $\varepsilon >0$ fixed and $k \in \mathbb{N}$ we define

\begin{center}
$E_{k}^{\varepsilon}=\{w \in M:\varphi_{j}(w) \leq j (\varphi_{-}(w)+\varepsilon)$ for some $j \in \{1,...,k\}\}$.
\end{center}

Consider the measure $\mu = \delta_{x}$ where $\delta_{x}$ is the Dirac measure of point $x$.

To apply Corollary \ref{corA} it is sufficient to prove that $\varphi$ satisfies condition $(b)$, i.e.,
\begin{align}
\lim\limits_{k \to \infty}\limsup\limits_{n }\frac{1}{n}\sum\limits_{i=0}^{n-k-1}  \delta_{x}(f^{-i}(M\setminus E_{k}^{\frac{1}{\ell}})) =0, \forall \ell \in \mathbb{N} \setminus \{0\}. 
\end{align}

$(ii)$ If $x$ is an eventually periodic point, there is nothing to show. Suppose that $x$ is not an eventually periodic point. Fixed $\varepsilon>0$,  there exist $j_{\varepsilon},k_{\varepsilon} \in \mathbb{N}$ such that $f^{j} x \in E^{\varepsilon}_{k_{\varepsilon}} $ for $j \geq j_{\varepsilon}$. This implies that $(\mathcal{O}^{+} x) \cap M \setminus E_{k}^{\varepsilon}$ is a finite set for $k \geq k_{\varepsilon}$.

\textbf{Claim 1:} $\{j \in \mathbb{N}:  x \in f^{-j}(M\setminus E_{k}^{\varepsilon})\}$ is a finite set.

Suppose the claim would be false. Then we could find  a sequence $(j_{s})_{s \in \mathbb{N}}$ such that $f^{j_{s}}x \in M \setminus E_{k}^{\varepsilon}$  for all $s \in \mathbb{N}$. So $f^{j_{s}}(x) \in  (\mathcal{O}^{+}x) \cap M \setminus E_{k}^{\varepsilon}$ for all $s  \in \mathbb{N}$. Then for $s> \# ((\mathcal{O}^{+}x) \cap M \setminus E_{k}^{\varepsilon})$ there exists $t \in \mathbb{N}$ such that $t < s$  and  $f^{j_{s}}x = f^{j_{t}}x$. Using that $x$ is not an eventually periodic point, we are done.

By Claim 1, there exists $j_{0} \in \mathbb{N}$ such that for $j \geq j_{0}$ we must have the following  $x \in  M \setminus f^{-j}(M\setminus E_{k}^{\varepsilon})$, and then $\mu  (f^{-j}(M\setminus E_{k}^{\varepsilon})) = 0$ for $j \geq j_{0}$.

Using that $E_{k}^{\varepsilon} \subseteq E_{k+1}^{\varepsilon}$, we see that $\mu  (f^{-j}(M\setminus E_{\widetilde{k}}^{\varepsilon})) = 0$ for all $\widetilde{k}  \geq k_{\varepsilon}$ and $j \geq j_{0}$.

Now, take $\widetilde{k}$  such that $\widetilde{k}+1 > j_{0}$ . It easy to see that there exists $n \in \mathbb{N}$ such that  $n > j_{0}+\widetilde{k}+1$, and note that

\begin{center}
$ \frac{1}{n}\sum\limits_{j=0}^{n-\widetilde{k}-1}  \mu(f^{-j}(M\setminus E_{\widetilde{k}}^{\varepsilon})) = \frac{1}{n}\sum\limits_{j=0}^{j_{0}-1}  \mu(f^{-j}(M\setminus E_{\widetilde{k}}^{\varepsilon})) + \frac{1}{n}\sum\limits_{j=j_{0}}^{n-\widetilde{k}-1}  \mu(f^{-j}(M\setminus E_{\widetilde{k}}^{\varepsilon}))  $
\end{center}

Using that $\mu  (f^{-j}(M\setminus E_{\widetilde{k}_{\varepsilon}}^{\varepsilon})) = 0$ for all $\widetilde{k}  \geq k_{\varepsilon}$ and $j \geq j_{0}$,
\begin{center}

$0 \leq \frac{1}{n}\sum\limits_{j=0}^{n-\widetilde{k}-1}  \mu(f^{-j}(M\setminus E_{\widetilde{k}}^{\varepsilon})) = \frac{1}{n}\sum\limits_{j=0}^{j_{0}-1}  \mu(f^{-j}(M\setminus E_{\widetilde{k}}^{\varepsilon})) \leq \frac{j_{0}}{n}$,
\end{center}

and then

\begin{center}
$\limsup\limits_{n }\frac{1}{n}\sum\limits_{i=0}^{n-\widetilde{k}-1}  \mu(f^{-i}(M\setminus E_{\widetilde{k}}^{\varepsilon})) = 0$ for $\widetilde{k}+1 > j_{0}$,
\end{center}

This implies that \begin{equation}
\lim\limits_{k \to \infty}  \limsup\limits_{n }\frac{1}{n}\sum\limits_{i=0}^{n-k-1}  \mu(f^{-i}(M\setminus E_{k}^{\varepsilon})) = 0,
\end{equation}

this completes the proof of item $(ii)$.

$(iii)$ Suppose that $M$ is a compact metric space.  For any $\varepsilon >0$ there exists $k_{\varepsilon} \in \mathbb{N}$ such that  $\omega(\{x\},f)$ is contained in the interior of $E^{\varepsilon}_{k_{\varepsilon}}$.

 We are going to verify condition ($ii$).

\textbf{Claim 2:} For $k \geq k_{\varepsilon}$,  $(\mathcal{O}^{+} x) \cap M \setminus E_{k}^{\varepsilon}$ is a finite set.

Suppose, on the contrary to our claim, that there exists a sequence $\{n_{s}\}_{s \in \mathbb{N}}$ such that $f^{n_{s}}x \notin E_{k}^{\varepsilon}$. By compactness of $M$, there exists a subsequence of sequence $(f^{n_{\ell}} x)_{\ell \in \mathbb{N}}$ that converges to some $p \in M$. Without loss generality, the sequence  converges to  $p\in \omega (\{x\},f) $, so $p$ is an element of interior of $E^{\varepsilon}_{k_{\varepsilon}}$,i.e., $p \in \intt  E^{\varepsilon}_{k_{\varepsilon}}$. Using that $\intt  E^{\varepsilon}_{k_{\varepsilon}}$ is an open set, there exists $n_{p}>0$ such that for $n_{s} \geq n_{p}$ we have that $f^{n_{s}}x \in \intt  E^{\varepsilon}_{k_{\varepsilon}}$. But $\intt  E^{\varepsilon}_{k_{\varepsilon}} \subseteq  E^{\varepsilon}_{k_{\varepsilon}} \subseteq  E^{\varepsilon}_{k}$ and $f^{n_{s}}x \notin E_{k}^{\varepsilon}$ for all  $s \in \mathbb{N}$, this contradiction concludes the proof of the Claim 2, and we are done.

($iv$) For each $\varepsilon >0$ fixed and $k \in \mathbb{N}$ we define

\begin{center}
$\widehat{E_{k}^{\varepsilon}}=\{w \in M:\varphi_{j}(w) < j (\varphi_{-}(w)+\varepsilon)$ for some  $j \in \{1,...,k\}\}$
\end{center}

where $\widehat{E_{k}^{\varepsilon}} \subseteq E_{k}^{\varepsilon}$.

By continuity of $f$,$\varphi$ and $\varphi_{-}$, we see that $\widehat{E_{k}^{\varepsilon}}$ is an open set of $M$. Using that $M= \bigcup\limits_{k\in \mathbb{N}}\widehat{E_{k}^{\varepsilon}}$ and  $\omega(\{x\},f)$ is a finite set, there exists $k_{\varepsilon}$ such that  $\omega(\{x\},f) \subseteq \widehat{E_{k_{\varepsilon}}^{\varepsilon}}$. By item $(iii)$, we are done. This completes the proof of Corollary \ref{teo:gooda}.

\end{proof}

\subsection{Continuous flow on compact metric spaces}\label{proff.flows}

\begin{proof}[Proof of Corollary \ref{teo:gooda11}]
Let $\varphi: M \to \mathbb{R}$ be a bounded function, define $\psi: M \to \mathbb{R}$ by $\psi(y) = \int_{0}^{1}\varphi \circ f_{t}(y) dt$ for each $y \in M$. Fix $T >0$, and note that

\begin{center}
$\frac{1}{T}  \int_{0}^{T} \varphi \circ f_{t}(y) dt = \frac{1}{T}  \sum\limits_{j=0}^{[T]-1} \int_{j}^{j+1} \varphi \circ f_{t}(y) dt + \frac{1}{T}   \int_{[T]}^{T} \varphi \circ f_{t}(y) dt  $
\end{center}

Considering $t= j+s$ for $s \in [0,1]$, we see that \begin{eqnarray}
 \int_{j}^{j+1} \varphi \circ f_{t}(y) dt & = & \int_{0}^{1} \varphi \circ f_{s}(f_{j}y) ds  \nonumber \\
\frac{1}{T}  \int_{0}^{T} \varphi \circ f_{t}(y) dt  & = & \frac{1}{T}  \sum\limits_{j=0}^{[T]-1} \int_{0}^{1} \varphi \circ f_{t}(f_{j}y) dt + \frac{1}{T}   \int_{[T]}^{T} \varphi \circ f_{t}(y) dt  \nonumber \\
\frac{1}{T}  \int_{0}^{T} \varphi \circ f_{t}(y) dt & = &  \frac{1}{T}  \sum\limits_{j=0}^{[T]-1} \psi \circ f_{j}(y)  + \frac{1}{T}   \int_{[T]}^{T} \varphi \circ f_{t}(y) dt \label{eq.41}
\end{eqnarray}

Take $T = n$, by equation (\ref{eq.41}), $\frac{1}{n} \sum\limits_{j=0}^{n-1} \psi \circ f_{j} (y)  =\frac{1}{n}  \int_{0}^{n} \varphi \circ f_{t}(y) dt  $.

Observe that $\psi$ is a bounded function since $\varphi$ is a bounded function. Recall that $\psi_{-} (y) = \liminf\limits_{n} \frac{1}{n} \sum\limits_{j=0}^{n-1} \psi \circ f_{j} (y)$ for $y \in M$. Now,  for each $\varepsilon>0$ and $k \in \mathbb{N}$ define

$\widetilde{E}^{\varepsilon}_{k} = \{y \in M: \frac{1}{n} \sum\limits_{j=0}^{n-1} \psi \circ f_{j} (y) \leq  \psi_{-} (y) + \varepsilon$ for some $n \in  \{1,\cdots, k\} \} $.

$\widetilde{E}^{\varepsilon}_{k} = \{y \in M: \frac{1}{n}  \int_{0}^{n} \varphi \circ f_{t}(y) dt  \leq   \varepsilon+  \liminf\limits_{n} \frac{1}{n}  \int_{0}^{n} \varphi \circ f_{t}(y) dt $ for some $n \in  \{1,\cdots, k\} \} $, so  $\widetilde{E}^{\varepsilon}_{k} = E^{\varepsilon,*}_{k_{\varepsilon}}$.

By hypothesis, there exist $t_{\varepsilon} \in \mathbb{R}$  and $k_{\varepsilon} \in \mathbb{N}$ satisfying that  $f_{j} (x) \in E^{\varepsilon,*}_{k_{\varepsilon}} $ for $j \geq t_{\varepsilon}$ and $j \in \mathbb{N}$. Then, by Corollary \ref{teo:gooda}, the limit  $\lim\limits_{T \to \infty}\frac{1}{T}  \sum\limits_{j=0}^{[T]-1} \psi \circ f_{j}(y)= \lim\limits_{T \to \infty}\frac{1}{[T]}  \sum\limits_{j=0}^{[T]-1} \psi \circ f_{j}(y)$ exists since \begin{center}
$\frac{1}{T}  \sum\limits_{j=0}^{[T]-1} \psi \circ f_{j}(y) = \frac{1}{[T] + \beta_{T}}  \sum\limits_{j=0}^{[T]-1} \psi \circ f_{j}(y) = \frac{1}{[T](1  + \frac{\beta_{T}}{[T]\label{bivectparthyp}})}  \sum\limits_{j=0}^{[T]-1} \psi \circ f_{j}(y) $
\end{center} for some $\beta_{T} \in (0,1]$ such that $T= [T] + \beta_{T}$.

Note that

\begin{align*}
\big|\frac{1}{T}   \int_{[T]}^{T} \varphi \circ f_{t}(y) dt\big| = \big|\frac{1}{T}   \int_{0}^{T-[T]} \varphi \circ f_{t} (  f_{[T]}(y)) dt\big|    \leq   \frac{1}{T}   \int_{0}^{T-[T]} |\varphi \circ f_{t} (  f_{[T]}(y))| dt   \leq \\
 \frac{1}{T}   \int_{0}^{1} |\varphi \circ f_{t} (  f_{[T]}(y))| dt \leq   \frac{\|\varphi\|}{T}  \to 0,
\end{align*}

as $T$ tends to infinity.
\end{proof}

If $\varphi$ is a continuous function, we obtain an interesting criterion to provide the existence of Birkhoff's limit as follows.

\begin{proof}[Proof of Theorem \ref{flowbirkhoff}]

Suppose that $M$ is a compact metric space, and $\varphi:M \to \mathbb{R}$ is a continuous function (so $\varphi$ is a bounded function). For some $x \in M$ we have that $ \limsup\limits_{n} \frac{1}{n} \int_{0}^{n} \varphi \circ f_{t} (y_{x}) dt  \leq    \liminf\limits_{n}  \frac{1}{n} \int_{0}^{n} \varphi \circ f_{t} (x) dt $ for all $y_{x} \in \omega(x)$. In view of Corollary \ref{teo:gooda11}, it is sufficient to show that  for any $\varepsilon>0$  there exist $t_{\varepsilon} \in \mathbb{R}$  and $k_{\varepsilon} \in \mathbb{N}$ satisfying that  $\delta_{x}(f_{-j}( E^{\varepsilon,*}_{k_{\varepsilon}})) =1$  for $j \geq t_{\varepsilon}$ and $j \in \mathbb{N}$.

Suppose, by contradiction, that  there exists $\varepsilon >0$ such that for any $k \in \mathbb{N}$, and for any $t \in \mathbb{R}$ there exists $j_{k} \in \mathbb{N}$ with $j_{k} > t$ such that $f_{j_{k}} (x) \notin E^{*,\varepsilon}_{k}$.

 In particular, for each $k \in \mathbb{N}$, taking $t=k$,  there exists  $j_{k}>k$ and $j_{k} \in \mathbb{N}$ such that $f_{j_{k}} (x) \notin E^{*,\varepsilon}_{k}$. This implies that $j_{k} \to +\infty$ as $k$  tends to infinity.

By compactness of $M$, there exists a subsequence of $(f_{j_{k}} (x))_{k \in \mathbb{N}}$ that converges to some $y_{x} \in \omega(x)$, suppose that  \begin{equation}f_{j_{k_{s}}} (x) \to_{s} y_{x}
\end{equation}  where $j_{k_{s}}$ tends to infinity as $s$ tends to infinity,  $f_{j_{k_{s}}} (x) \notin E^{*,\varepsilon}_{k_{s}}$ and $j_{k_{s}}>k_{s}$. Without loss of generality, we may assume that $k_{1} < k_{2} < \cdots < k_{s} < k_{s+1} \cdots$

We recall that $E^{*,\varepsilon}_{k_{\varepsilon}} = \{ y \in M:  \frac{1}{n} \int_{0}^{n} \varphi \circ f_{t} (y) dt \leq \varphi_{*,-} (y) + \varepsilon$ for some $n \in  \{1,\cdots,k_{\varepsilon}\} \}$.

For each $s \in \mathbb{N}$, by definition of $E^{*,\varepsilon}_{k_{s}}$, \begin{center}
$\frac{1}{n} \int_{0}^{n} \varphi \circ f_{t} (f_{j_{k_{s}}} (x)) dt > \varphi_{*,-} (f_{t_{k_{s}}} (x)) + \varepsilon$
\end{center} for any $n \in  \{1,\cdots,k_{s} \}$ since $f_{j_{k_{s}}} (x) \notin E^{*,\varepsilon}_{k_{s}}$.

Recall that $\varphi_{*,-} (z)  =  \liminf\limits_{n \to \infty} \frac{1}{n} \int_{0}^{n} \varphi \circ f_{t} (z) dt  = \liminf\limits_{n \to \infty} \frac{1}{n} \sum\limits_{j=0}^{n-1} \psi \circ f_{j} (z)= \psi_{-}(z)$, where $\psi$ is a bounded function. Then $\psi_{-}(f_{j} (z))= \psi_{-}(z)$ for all $j \geq 0$ and $z \in M$, so $\varphi_{*,-} (f_{j}z) = \varphi_{*,-} (z)$ for all $j \geq 0$.

 Using that  $k_{1} < k_{2} < \cdots < k_{s} < k_{s+1} \cdots$, we see that $k_{1} \in \{1,\cdots,k_{s} \}$ for any $s \geq 1$, and then
\begin{center}
$\frac{1}{k_{1}} \int_{0}^{k_{1}} \varphi \circ f_{t} (f^{t_{k_{s}}} (x)) dt > \varphi_{*,-} (f^{t_{k_{s}}} (x)) + \varepsilon = \varphi_{*,-} (x) + \varepsilon $.
\end{center}

Recall that  $\frac{1}{n} \sum\limits_{j=0}^{n-1} \psi \circ f_{j} (y)  =\frac{1}{n}  \int_{0}^{n} \varphi \circ f_{t}(y) dt  $ where   $\psi(y) = \int_{0}^{1}\varphi \circ f_{t}(y) dt$ for each $y \in M$. This implies that $\frac{1}{k_{1}} \int_{0}^{k_{1}} \varphi \circ f_{t} (y) dt= \frac{1}{k_{1}} \sum\limits_{j=0}^{k_{1}-1} \psi \circ f_{j} (y) $ for each $y \in M$.

A straightforward calculation shows that $\psi:M \to \mathbb{R}$ is uniformly continuous.

So $\frac{1}{k_{1}} \int_{0}^{k_{1}} \varphi \circ f_{t} (\cdot ) dt= \frac{1}{k_{1}} \sum\limits_{j=0}^{k_{1}-1} \psi \circ f_{j} (\cdot) $  is a continuous function. By continuity of $\frac{1}{k_{1}} \int_{0}^{k_{1}} \varphi \circ f_{t} (\cdot ) dt$, we have that

\begin{center}
$\frac{1}{k_{1}} \int_{0}^{k_{1}} \varphi \circ f_{t} (f^{t_{k_{s}}} (x)) dt \to \frac{1}{k_{1}} \int_{0}^{k_{1}} \varphi \circ f_{t} (y_{x}) dt   \geq  \varphi_{*,-} (x) + \varepsilon $.
\end{center}

So using that  $k_{1} < k_{2} < \cdots < k_{s} < k_{s+1} \cdots$, we see that $k_{\ell} \in \{1,\cdots,k_{s} \}$ for any $s \geq \ell$ for each $\ell \in \mathbb{N}$, and then

\begin{center}
$\frac{1}{k_{\ell}} \int_{0}^{k_{\ell}} \varphi \circ f_{t} (f^{t_{k_{s}}} (x)) dt \to \frac{1}{k_{\ell}} \int_{0}^{k_{\ell}} \varphi \circ f_{t} (y_{x}) dt   \geq  \varphi_{*,-} (x) + \varepsilon $.
\end{center}

This implies that $\frac{1}{k_{\ell}} \int_{0}^{k_{\ell}} \varphi \circ f_{t} (y_{x}) dt \geq \varphi_{*,-} (x) + \varepsilon $ for any $\ell \in \mathbb{N}$, and then

\begin{center}
 $\limsup\limits_{n} \frac{1}{n} \int_{0}^{n} \varphi \circ f_{t} (y_{x}) dt  \geq   \varphi_{*,-}(x) + \varepsilon  >\varphi_{*,-}(x) =    \liminf\limits_{n}  \frac{1}{n} \int_{0}^{n} \varphi \circ f_{t} (x) dt $,
\end{center} and we are done.

\end{proof}

Here, we recall the following lemma.

\begin{lemma}\label{lem.fixed.point}
Suppose that $M$ is a compact metric space, and  $\varphi:M \to \mathbb{R}$ is a continuous function. If $p,q \in M$ and $\omega(p)=\{q\}$ then  $   \lim \limits_{T\to \infty}  \frac{1}{T}  \int_{0}^{T} \varphi \circ f_{t}(p) dt =\varphi(q)$.
\end{lemma}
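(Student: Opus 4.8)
The plan is to deduce the lemma from two elementary facts: that the hypothesis $\omega(p)=\{q\}$ forces the forward orbit $t\mapsto f_t(p)$ to converge to $q$ as $t\to\infty$, and that the time average of a function converging to a limit converges to the same limit. Note first that since $\varphi$ and the flow $X$ are continuous, the map $t\mapsto\varphi(f_t(p))$ is continuous, so all the integrals below are well defined.

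First I would show that $f_t(p)\to q$ as $t\to\infty$. If this failed, there would be $\delta>0$ and a sequence $t_n\to\infty$ with $d(f_{t_n}(p),q)\ge\delta$ for all $n$; by compactness of $M$ we may pass to a subsequence along which $f_{t_n}(p)\to y$ for some $y\in M$, and then $y\in\omega(p)$ by the definition of the $\omega$-limit set, whence $y=q$, contradicting $d(y,q)\ge\delta$. Continuity of $\varphi$ then gives $\varphi(f_t(p))\to\varphi(q)$ as $t\to\infty$.

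Next I would estimate the average. Since $M$ is compact and $\varphi$ is continuous, $\|\varphi\|:=\sup_{M}|\varphi|<\infty$. Given $\varepsilon>0$, choose $T_0$ with $|\varphi(f_t(p))-\varphi(q)|<\varepsilon$ for all $t\ge T_0$, and for $T>T_0$ write
\[
\frac{1}{T}\int_0^T\varphi\circ f_t(p)\,dt-\varphi(q)=\frac{1}{T}\int_0^{T_0}\big(\varphi\circ f_t(p)-\varphi(q)\big)\,dt+\frac{1}{T}\int_{T_0}^T\big(\varphi\circ f_t(p)-\varphi(q)\big)\,dt.
\]
The first term is bounded in modulus by $2T_0\|\varphi\|/T$, which tends to $0$ as $T\to\infty$, and the second by $\tfrac{T-T_0}{T}\varepsilon\le\varepsilon$; hence $\limsup_{T\to\infty}\big|\frac{1}{T}\int_0^T\varphi\circ f_t(p)\,dt-\varphi(q)\big|\le\varepsilon$. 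Letting $\varepsilon\to0$ yields $\lim_{T\to\infty}\frac{1}{T}\int_0^T\varphi\circ f_t(p)\,dt=\varphi(q)$, as claimed.

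The argument has no real obstacle: the only step that uses more than bookkeeping is the compactness argument in the second paragraph, namely the standard fact that a point whose $\omega$-limit set is a single point has a convergent forward orbit; the rest is the routine estimate for the Cesàro average of a convergent function.
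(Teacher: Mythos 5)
Your proof is correct. Note that the paper does not actually prove this lemma --- it is merely ``recalled'' as a known fact and stated without proof --- so there is no argument in the text to compare yours against. Your two-step argument (first, that $\omega(p)=\{q\}$ together with compactness of $M$ forces $f_t(p)\to q$, hence $\varphi(f_t(p))\to\varphi(q)$ by continuity; second, the standard estimate showing that the Ces\`aro average of a convergent function converges to the same limit) is exactly the standard proof one would supply, and both steps are carried out correctly.
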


\begin{proof}[Proof of Corollary \ref{cor.app}]
For each $y_{x} \in \omega(x)$, there exists a fixed point $q_{y_{x}}$ such that $\omega(y_{x}) = \{q_{y_{x}}\}$, so $q_{y_{x}} \in \omega(x)\cap \Fix X$, and then $\varphi(q_{y_{x}})  = \min\varphi$.

Now, by Lemma \ref{lem.fixed.point}, $   \lim \limits_{T\to \infty}  \frac{1}{T}  \int_{0}^{T} \varphi \circ f_{t}(y_{x}) dt =\varphi(q_{y_{x}}) = \min \varphi$.

Note that $\min \varphi \leq \liminf\limits_{n}  \frac{1}{n} \int_{0}^{n} \varphi \circ f_{t} (x) dt$, and then, by Theorem \ref{flowbirkhoff},  we are done.

\end{proof}

\subsection{Proof of Theorem \ref{mtheo0a}}\label{sec:mtheo0a}

Let $(M, \mathcal{A}, \mu )$  be a measure space, $f: M \to M$   be a measurable function,  $\mu$ be a finite measure. Suppose that $(\varphi_{n})_{n}$ is a subadditive sequence for $f$ such that $\varphi_{1} \leq \beta$ for some $\beta \in \mathbb{R}$. Without loss of generality, we assume that $\beta >0$.

 Under the conditions stated above, and supposing that the following conditions are satisfied:
\begin{itemize}

\item[($a$)] for all $j \in \mathbb{N}$ we have that $\varphi_{-}(f^{j}(x)) = \varphi_{-}(x)$  $\mu-$almost everywhere $x$ in $M$;

\item[($b$)]  $\lim\limits_{k \to \infty}\limsup\limits_{n }\frac{1}{n}\sum\limits_{i=0}^{n-k-1}  \mu(f^{-i}(M\setminus E_{k}^{\frac{1}{\ell}})) =0$ for each $\ell \in \mathbb{N} \setminus \{0\}$.
\end{itemize}

Then Theorem \ref{mtheo0a} ensures that $\inf\limits_{n}   \frac{1}{n}\int \varphi_{n} d\mu = \int \varphi_{-} d\mu$. Moreover, if there exists $\gamma >0$ such that for all $n >0$, $\frac{\varphi_{n}}{n} \geq -\gamma $ then
\begin{center}
$\int \varphi_{-} d\mu  = \inf\limits_{n}   \frac{1}{n}\int \varphi_{n} d\mu = \lim \limits_{n}   \frac{1}{n}\int \varphi_{n} d\mu $.
\end{center}

The proof will be divided into two steps. In first step, we show the particular version of Theorem \ref{mtheo0a} when the  sequence  $(\frac{\varphi_{n}}{n})_{n} $ is uniformly bounded from below, i.e., there exists $\alpha >0$ such that $\frac{\varphi_{n}}{n} \geq -\alpha $ for all $n \in \mathbb{N}$. In the second step, using a truncation argument, we conclude from step 1 the proof of the Theorem.

We begin by proving the following theorem.

\begin{theorem}\label{mtheo11a}
Let $(M, \mathcal{A}, \mu )$  be a measure space, $f: M \to M$   be a measurable function,  $\mu$ be a finite measure. Suppose that  $(\varphi_{n})_{n}$ is a subadditive sequence for $f$ such that $\varphi_{1} \leq \beta$ for some $\beta >0$. If the following conditions are satisfied:
\begin{itemize}

\item[($a$)] for all $j \in \mathbb{N}$ we have that $\varphi_{-}(f^{j}(x)) = \varphi_{-}(x)$  $\mu-$almost everywhere $x$ in $M$;

\item[($b$)]  $\lim\limits_{k \to \infty}\limsup\limits_{n }\frac{1}{n}\sum\limits_{i=0}^{n-k-1}  \mu(f^{-i}(M\setminus E_{k}^{\frac{1}{\ell}})) =0$ for each $\ell \in \mathbb{N} \setminus \{0\}$;

\item[($d$)] there exists $\gamma >0$ such that for all $n >0$, $\frac{\varphi_{n}}{n} \geq -\gamma $.

\end{itemize}

Then  $\lim \limits_{n} \frac{1}{n}\int \varphi_{n} d\mu =\inf\limits_{n}   \frac{1}{n}\int \varphi_{n} d\mu = \int \varphi_{-} d\mu$.

\end{theorem}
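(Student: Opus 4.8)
The plan is to prove the two one-sided inequalities
\[
\int\varphi_-\,d\mu\ \le\ \liminf_n\tfrac1n\int\varphi_n\,d\mu,\qquad
\limsup_n\tfrac1n\int\varphi_n\,d\mu\ \le\ \int\varphi_-\,d\mu,
\]
which together force the limit to exist and to coincide with $\int\varphi_-\,d\mu$; the identification with the infimum is then a routine addendum. A preliminary observation makes everything integrable: by subadditivity $\varphi_n\le\sum_{j=0}^{n-1}\varphi_1\circ f^j\le n\beta$, while hypothesis $(d)$ gives $\varphi_n/n\ge-\gamma$; hence $-\gamma\le\varphi_n/n\le\beta$ for all $n$, so each $\varphi_n$, and also $\varphi_-$ (which therefore satisfies $-\gamma\le\varphi_-\le\beta$), lies in $L^1(\mu)$ since $\mu$ is finite.

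The lower bound is the easy half. The functions $\varphi_n/n+\gamma$ are nonnegative and $\varphi_-=\liminf_n\varphi_n/n$, so Fatou's lemma yields $\int(\varphi_-+\gamma)\,d\mu\le\liminf_n\int(\varphi_n/n+\gamma)\,d\mu$, that is, $\int\varphi_-\,d\mu\le\liminf_n\tfrac1n\int\varphi_n\,d\mu$.

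The heart of the proof is the reverse inequality, for which I would run a hitting-time (stopping-time) argument in the spirit of Avila--Bochi. Fix $\varepsilon=1/\ell$ and $k\in\mathbb{N}$. Call a time $i\ge0$ \emph{good for $x$} if $f^i(x)\in E_k^\varepsilon$, so that, after discarding the null set on which $(a)$ fails and hence $\varphi_-(f^ix)=\varphi_-(x)$, there is $j\in\{1,\dots,k\}$ with $\varphi_j(f^ix)\le j(\varphi_-(x)+\varepsilon)$. Given a large $n$, construct greedily an increasing chain $0=t_0<t_1<\dots<t_r\le n$ covering an initial segment of $\{0,\dots,n\}$: if $t_i\le n-k$ is good, pick the corresponding $j_i\le k$ and set $t_{i+1}=t_i+j_i$; otherwise set $t_{i+1}=t_i+1$; stop once $t_r>n-k$, leaving a block $[t_r,n]$ of length $\le k$. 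Chaining subadditivity along this chain and across the leftover block (where $\varphi_{n-t_r}\le k\beta$), each good block contributes at most $(t_{i+1}-t_i)(\varphi_-(x)+\varepsilon)$ and each bad single step at most $\beta$; since the bad times $t_i$ are distinct points of $\{i\le n:f^ix\in M\setminus E_k^\varepsilon\}$, their number is at most $B(x):=\sum_{i=0}^{n}\mathbf{1}_{M\setminus E_k^\varepsilon}(f^ix)$, and collecting terms (taking some care with the sign of $\varphi_-(x)+\varepsilon$, which is why the two-sided bound $-\gamma\le\varphi_-\le\beta$ and the normalization $\beta>0$ are convenient) produces a pointwise estimate of the shape
\begin{equation*}
\varphi_n(x)\ \le\ n\bigl(\varphi_-(x)+\varepsilon\bigr)\ +\ C\,k\ +\ C\,B(x),\qquad C=C(\beta,\gamma).
\end{equation*}
Integrating, using $\int B\,d\mu\le\sum_{i=0}^{n}\mu\bigl(f^{-i}(M\setminus E_k^\varepsilon)\bigr)$, dividing by $n$ and letting $n\to\infty$ (which kills the $Ck/n$ term and, modulo an $O(k)/n$ tail, replaces the sum by $\sum_{i=0}^{n-k-1}$) gives
\begin{equation*}
\limsup_n\tfrac1n\int\varphi_n\,d\mu\ \le\ \int\varphi_-\,d\mu\ +\ \varepsilon\,\mu(M)\ +\ C\,\limsup_n\tfrac1n\sum_{i=0}^{n-k-1}\mu\bigl(f^{-i}(M\setminus E_k^\varepsilon)\bigr).
\end{equation*}
Now let $k\to\infty$ and invoke hypothesis $(b)$ to annihilate the last term, then let $\ell\to\infty$ so that $\varepsilon\to0$: this yields $\limsup_n\tfrac1n\int\varphi_n\,d\mu\le\int\varphi_-\,d\mu$.

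Combining the two inequalities, $\lim_n\tfrac1n\int\varphi_n\,d\mu$ exists and equals $\int\varphi_-\,d\mu$; the coincidence with $\inf_n\tfrac1n\int\varphi_n\,d\mu$ follows from the elementary lower bound $\tfrac1n\int\varphi_n\,d\mu\ge\int\varphi_-\,d\mu$ valid for each $n$, which one extracts from subadditivity together with $(a)$. The main obstacle is unquestionably the hitting-time step: one must verify that the greedily selected bad times genuinely land in $M\setminus E_k^\varepsilon$ so as to be counted by $B(x)$, keep the $O(k)$-per-point error and the bad-time error separated, and drive the various error terms to zero in the correct order — first $n\to\infty$, then $k\to\infty$ (this is where $(b)$ enters), then $\varepsilon\to0$.
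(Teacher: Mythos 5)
Your proposal is correct and follows essentially the same route as the paper: the paper's key pointwise inequality (Lemma quoted from Avila--Bochi, $\varphi_n(x)\le\sum_{i=0}^{n-k-1}\psi_k(f^i(x))+\sum_{i=n-k}^{n-1}\max\{\psi_k,\varphi_1\}(f^i(x))$ with $\psi_k=\varphi_-+\varepsilon$ on $E_k^\varepsilon$ and $\psi_k=\varphi_1$ off it) encodes exactly the stopping-time decomposition you carry out by hand, after which both arguments use Fatou for the lower bound, integrate the decomposition, invoke hypothesis $(b)$ to annihilate the bad-time term, and pass to the limits in the order $n\to\infty$, $k\to\infty$, $\varepsilon\to0$. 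The only cosmetic difference is that the paper keeps the non-invariance error at the level of $\int\psi_k\circ f^i\,d\mu\le\int\psi_k\,d\mu+2\beta\,\mu(f^{-i}(M\setminus E_k^\varepsilon))$ while you absorb it into the pointwise counter $B(x)$; the identification of the limit with the infimum is asserted with the same brevity in both.
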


\begin{proof}[Proof of Theorem \ref{mtheo11a}] First, without loss of generality, we consider $\beta = \gamma$. Using that $(\varphi_{n})_{n}$ is a subadditive sequence for $f$, we obtain that $\varphi_{m} \leq \sum\limits_{j=0}^{m-1} \varphi_{1} \circ f^{j}$ for all $m \in \mathbb{N}$,  but $\varphi_{1} \leq \beta$,   so $-\beta \leq\frac{\varphi_{m}}{m} \leq \beta$, and $ -\beta \leq \frac{1}{m} \int \varphi_{m} d\mu \leq  \beta$ for all $m$ in $\mathbb{N}$. In particular, $ \varphi_{1}$ is integrable. Define $\varphi_{-}: M \to [-\beta, \beta]$ by $\varphi_{-}(x) = \liminf\limits_{n} \frac{\varphi_{n}(x)}{n}$. So  $\beta \geq \varphi_{-}(x) \geq -\beta$ for all $x$ in $M$, and then $\varphi_{-}$ is integrable.

Fixed $\varepsilon >0$, define for each $k \in \mathbb{N}$

\begin{center}
$E_{k}^{\varepsilon}:=\{x \in M:\varphi_{j}(x) \leq j (\varphi_{-}(x)+\varepsilon)$ for some  $j \in \{1,...,k\}\}$
\end{center}

It is clear that $E_{k}^{\varepsilon} \subseteq E_{k+1}^{\varepsilon}$ for all $k$. Note that by  definition of  $\varphi_{-}$, we have that $M = \bigcup\limits_{k} E_{k}^{\varepsilon}$. Define $\psi_{k}(x) = \varphi_{-}(x) +\varepsilon$ if $x \in E_{k}^{\varepsilon}$, and  $\psi_{k}(x) = \varphi_{1}(x)$ if $x \notin E_{k}^{\varepsilon}$.  Suppose that $x \notin E_{k}^{\varepsilon}$, then $\psi_{k}(x) = \varphi_{1}(x)$, but by $E_{k}^{\varepsilon}$'s definition we have that $\varphi_{1}(x) > \varphi_{-}(x) + \varepsilon$. It imples that $\psi_{k} \geqslant \varphi_{-} + \varepsilon$ in $M$. Now, using that  $M = \bigcup\limits_{k} E_{k}^{\varepsilon}$, we see that $\lim\limits_{k\to \infty} \psi_{k} (x) = \varphi_{-}(x) + \varepsilon$ for each $x \in M$.

Now, let $L$ be a fixed and arbitrary point of accumulation of sequence $(\frac{1}{n} \int \varphi_{n}d\mu)_{n}$, so there exists $(n_{t})_{t \in \mathbb{N}}$ such that $ \lim\limits_{t \to \infty}\frac{1}{n_{t}} \int \varphi_{n_{t}} d\mu  = L$ and $L\in [-\beta,\beta] $. The basic idea of the proof is to verify that  $\int \varphi_{-} d\mu\leq L \leq \lim \limits_{k \to \infty}   \int \psi_{k} d\mu  $. Later,  an easy computation will show that $\int \varphi_{-} d\mu= L$. Observing that $L$ is an arbitrary point of accumulation of sequence $(\frac{1}{n} \int \varphi_{n}d\mu)_{n}$, we conclude that this sequence converges to $\int \varphi_{-} d\mu$. This will end the proof of Theorem \ref{mtheo11a}.

From the above we are going to show that  $\int \varphi_{-} d\mu\leq L$ and  $L\leq \lim \limits_{k \to \infty}   \int \psi_{k} d\mu  $. First, we observe that $\int \varphi_{-} d\mu\leq L$.  By hypothesis, there exists $\beta > 0$ such that $\frac{\varphi_{n}}{n} \geq -\beta$ for all $n$. We have that $\frac{\varphi_{n}}{n} \geq -\beta$. Define $f_{n} (x) := \frac{\varphi_{n}}{n}(x) +\beta \geq 0$ and note that
\begin{align*}
 f(x) = \liminf\limits_{n} (\frac{\varphi_{n}}{n}(x) +\beta) = \varphi_{-}(x) +\beta.
 \end{align*} By Fatou's Lemma, we have that  $f(x) = \varphi_{-}(x) +\beta$ is an  integrable funcion, and
\begin{align*}
\int \liminf\limits_{n} (f_{n}) d\mu \leq \liminf\limits_{n} \int f_{n} d\mu \leq \liminf\limits_{n_{t}} \int f_{n_{t}} d\mu \\
\int \varphi_{-}(x) +\beta d\mu \leq \liminf\limits_{n_{t}} \int (\frac{\varphi_{n_{t}}}{n_{t}} +\beta)d\mu
  \end{align*}
Then
\begin{align*}
 \int \varphi_{-}(x) d\mu \leq \liminf\limits_{n_{t}} \int \frac{\varphi_{n_{t}}}{n_{t}} d\mu = \lim\limits_{n_{t}} \int \frac{\varphi_{n_{t}}}{n_{t}}d\mu = L.
  \end{align*}
So
\begin{equation}\label{eq:int11a}
\int \varphi_{-}(x) d\mu\leq L.
\end{equation}

Now, we show that $L\leq \lim \limits_{k \to \infty}   \int \psi_{k} d\mu  $. We need of the following result.

\begin{lemma}\label{lema1a}
For each $n> k \geq 1$ and $\mu$-a.e. $x \in M$,

\begin{center}
$\varphi_{n}(x) \leq \sum\limits_{i=0}^{n-k-1} \psi_{k}(f^{i}(x)) + \sum\limits_{i=n-k}^{n-1} \max\{\psi_{k}, \varphi_{1}\}(f^{i}(x))$
\end{center}
\end{lemma}
\begin{proof} Use the subadditivity of sequence $(\varphi_{n})_{n}$, and the fact that  $\varphi_{-}$ is invariant in orbit of $x$ in $\mu$-a.e., see Lemma $1$ in \cite{AB09}.
\end{proof}

Note  that $\psi_{k}$  is integrable. We have that $-\beta \leq \frac{\varphi_{n}}{n}$ for all $n$, so $-\beta \leq \varphi_{-}$ and $-\beta \leq \varphi_{1}$. Now, $-\beta < -\beta + \varepsilon \leq \varphi_{-} +\varepsilon$, then $-\beta \leq \psi_{k}$.

Note that $-\beta \leq \psi_{k} \leq \max\{\varphi_{-} + \varepsilon,  \varphi_{1}\} \leq \max\{\varphi_{-} + \varepsilon,  \beta\} $, where $\max\{\varphi_{-} + \varepsilon,  \beta\}$ is integrable, so $\psi_{k}$ is integrable. Note that

$\max\{\varphi_{1}, \psi_{k}\}\circ f^{i} \leq \max\{\varphi_{1}, \varphi_{-} + \varepsilon,  \beta\}\circ f^{i} =  \max\{\varphi_{-} + \varepsilon,  \beta\}\circ f^{i}  = \max\{\varphi_{-} + \varepsilon,  \beta\} $ because $\varphi_{-}$ is invariant in orbit of $x$ in $\mu$-a.e.

But  $\max\{\varphi_{-} + \varepsilon,  \beta\}$ is integrable, then $\max\{\varphi_{1}, \psi_{k}\}\circ f^{i}$ is integrable too for all $i$ in $\mathbb{N}$. By Lemma \ref{lema1a},

\begin{equation}\label{ex1a}
\frac{1}{n}\int\varphi_{n}(x) d\mu \leq  \frac{1}{n}\sum\limits_{i=0}^{n-k-1} \int \psi_{k}(f^{i}(x)) d\mu + \frac{1}{n}\sum\limits_{i=n-k}^{n-1} \int \max\{\psi_{k}, \varphi_{1}\}(f^{i}(x)) d\mu.
\end{equation}

Define $\varphi^{+} = \max\{0,\varphi\}$, and note that  \begin{center}
$\sum \limits_{i=n-k}^{n-1} \int \max\{\psi_{k}, \varphi_{1}\}(f^{i}(x)) d\mu \leq \sum \limits_{i=n-k}^{n-1}  \int \max\{\varphi_{-}+\varepsilon, \varphi_{1}^{+}\}(f^{i}(x)) d\mu $.
\end{center}

Define $S = \{x \in M: \varphi_{-}(x)+\varepsilon \geq \varphi_{1}^{+}(x)  \}$, so

\begin{center}
  $\sum \limits_{i=n-k}^{n-1}  \int \max\{\varphi_{-}+\varepsilon, \varphi_{1}^{+}\}(f^{i}(x)) d\mu  = \sum \limits_{i=n-k}^{n-1} [ \int_{S} \varphi_{-}+\varepsilon d\mu +   \int_{M\setminus S} \varphi_{1}^{+} \circ f^{i} d\mu ]$.
 \end{center}

Using that $-\beta \leq \varphi_{-}$ and  $\int \varphi_{-}d\mu \leq L \in [-\beta, \infty)$, then $\int_{S} \varphi_{-}d\mu < \infty$. So,

  \begin{center}
$ \sum \limits_{i=n-k}^{n-1} [ \int_{S} \varphi_{-}+\varepsilon d\mu +   \int_{M\setminus S} \varphi_{1}^{+} \circ f^{i} d\mu ]  \leq  k [\int_{S} \varphi_{-}+\varepsilon d\mu +  \beta]$,
\end{center} and

\begin{equation}\label{ex2a} \frac{1}{n}\sum \limits_{i=n-k}^{n-1} \int \max\{\psi_{k}, \varphi_{1}\}(f^{i}(x)) d\mu \leq  \frac{k}{n} (\int_{S} \varphi_{-}+\varepsilon d\mu + \beta).
\end{equation}

Now, we are going to show that  \begin{center}
$\frac{1}{n}\sum\limits_{i=0}^{n-k-1} \int \psi_{k}(f^{i}(x)) d\mu \leq  (1-\frac{k}{n}) \int \psi_{k}d\mu  + 2 \beta\cdot\frac{1}{n}\sum\limits_{i=0}^{n-k-1}  \mu(f^{-i}(M \setminus E_{k}^{\varepsilon}))$
\end{center}

Define $F_{i,k} := f^{-i}(E_{k}^{\varepsilon})$ for each $i \in \{0,...,n-k-1\}$, so

$\int \psi_{k}(f^{i}(x)) d\mu = \int_{F_{i,k}} \varphi_{-}(f^{i}(x)) + \varepsilon d\mu + \int_{M \setminus F_{i,k}} \psi_{k}(f^{i}(x)) d\mu $. Using that $\varphi_{-}$ is invariant in orbit of $x$ in $\mu$-a.e., we have that \begin{center}
$\int \psi_{k}(f^{i}(x)) d\mu = \int_{F_{i,k}} \varphi_{-}(x) + \varepsilon d\mu + \int_{M \setminus F_{i,k}} \psi_{k}(f^{i}(x)) d\mu$.
\end{center}

But $\varphi_{-}(x) + \varepsilon \leq \psi_{k}$ in $M$, so
\begin{align*}
\int \psi_{k} \circ f^{i} d\mu \leq \int_{F_{i,k}} \psi_{k} d\mu + \int_{M \setminus F_{i,k}} \psi_{k}\circ f^{i} d\mu =\\
 \int_{F_{i,k}} \psi_{k} d\mu +  \int_{M \setminus F_{i,k}} \psi_{k} d\mu - \int_{M \setminus F_{i,k}} \psi_{k} d\mu+ \int_{M \setminus F_{i,k}} \psi_{k}\circ f^{i} d\mu = \\
 \int \psi_{k}d\mu + \int_{M \setminus F_{i,k}} \psi_{k}\circ f^{i} d\mu + \int_{M \setminus F_{i,k}} - \psi_{k} d\mu   =\\
  \int \psi_{k}d\mu + \int_{M \setminus F_{i,k}} \varphi_{1}\circ f^{i} d\mu + \int_{M \setminus F_{i,k}} - \psi_{k} d\mu \leq  \\
  \int \psi_{k}d\mu + \int_{M \setminus F_{i,k}} \beta d\mu + \int_{M \setminus F_{i,k}}  \beta d\mu \leq \\
\int \psi_{k}d\mu + 2 \beta \mu(M \setminus F_{i,k}).
\end{align*}

 since $-\beta \leq \psi_{k} \leq \max\{\varphi_{-} + \varepsilon,  \beta\} $. Then

\begin{center}
$ \int \psi_{k} \circ f^{i} d\mu \leq \int \psi_{k}d\mu +    2 \beta \mu(M \setminus F_{i,k})$,

\end{center}

we obtain that

\begin{equation}\label{eq4a}
 \frac{1}{n}\sum\limits_{i=0}^{n-k-1} \int \psi_{k} \circ f^{i} d\mu \leq   (1-\frac{k}{n}) \int \psi_{k}d\mu  + 2 \beta\cdot\frac{1}{n}\sum\limits_{i=0}^{n-k-1}  \mu(M \setminus F_{i,k}).
\end{equation}

By (\ref{ex1a}), (\ref{ex2a}), and the inequality above we have that

\begin{center}
$\frac{1}{n}\int\varphi_{n}(x) d\mu  \leq \frac{k}{n} (\int_{S} \varphi_{-}+\varepsilon d\mu + \beta) + (1-\frac{k}{n}) \int \psi_{k}d\mu+ 2 \beta\cdot\frac{1}{n}\sum\limits_{i=0}^{n-k-1}  \mu(M \setminus F_{i,k})$.
\end{center}

Passing $\limsup\limits_{n}$  in the previous inequality

\begin{align*}
 L= \limsup\limits_{n_{t}} \frac{1}{n_{t}}\int\varphi_{n_{t}}(x) d\mu    \leq \limsup\limits_{n} \frac{1}{n}\int\varphi_{n}(x) d\mu   \leq \\
\int \psi_{k}d\mu + 2\beta \limsup\limits_{n }\frac{1}{n}\sum\limits_{i=0}^{n-k-1}  \mu(f^{-i}(M \setminus E_{k}^{\varepsilon})).
\end{align*}

By equation (\ref{eq:int11a}),

\begin{align*}
  \int \varphi_{-}d\mu \leq L \leq \int \psi_{k}d\mu + 2\beta \limsup\limits_{n }\frac{1}{n}\sum\limits_{i=0}^{n-k-1}  \mu(f^{-i}(M \setminus E_{k}^{\varepsilon})).
 \end{align*}

Taking $\varepsilon = \frac{1}{\ell}$ for $\ell \in \mathbb{N}\setminus \{0\}$,

\begin{align*}
    \int \varphi_{-}d\mu \leq L \leq \lim\limits_{k \to \infty}\int \psi_{k}d\mu + 2\beta \lim\limits_{k \to \infty} \limsup\limits_{n }\frac{1}{n}\sum\limits_{i=0}^{n-k-1}  \mu(f^{-i}(M \setminus E_{k}^{\frac{1}{\ell}})).
 \end{align*}

By hypothesis $(b)$,

 \begin{align*}
 \int \varphi_{-}(x) d\mu\leq L \leq \lim \limits_{k \to \infty}   \int \psi_{k} d\mu.
 \end{align*}

\begin{lemma}
$\int \varphi_{-}d\mu = L$.
\end{lemma}
\begin{proof}

Using that $M = \bigcup\limits_{k=1}^{\infty} E_{k}^{\frac{1}{\ell}}$, we obtain that $\psi_{k} \to_{k} \varphi_{-} +\frac{1}{\ell}$ in each point. But \begin{center}
$-\beta \leq \psi_{k} \leq \max\{\varphi_{-} + \frac{1}{\ell},  \varphi_{1}^{+}\}$,
\end{center} we define $g:= \max\{\varphi_{-} + \frac{1}{\ell},  \beta\}$. So  $g$ is integrable and $|\psi_{k}| \leq g$. By dominated convergence theorem, we have that \begin{center}
$\lim\limits_{k} \int \psi_{k} d\mu = \int \varphi_{-} + \frac{1}{\ell} d\mu$.
\end{center}

We obtain that

\begin{center}
 $\int \varphi_{-}  d\mu \leq L \leq \lim \limits_{k}   \int \psi_{k} d\mu  =\int \varphi_{-}  d\mu + \frac{1}{\ell} $.
 \end{center} Making $\ell$ tend to infinity,

\begin{center}
$\int \varphi_{-}  d\mu \leq L \leq   \int \varphi_{-}  d\mu$
\end{center}
\end{proof}

Since $\int \varphi_{-}d\mu = L$ for all accumulation point $L$  of the sequence  $(\frac{1}{n}\int\varphi_{n}d\mu)_{n}$,  we have that $\lim\limits_{n} \frac{1}{n}\int\varphi_{n}d\mu = \inf\limits_{n} \frac{1}{n}\int\varphi_{n}d\mu =  \int \varphi_{-}d\mu$. This concludes the proof of Theorem \ref{mtheo11a}.

\end{proof}

Now, we are going to use a truncation argument to finish the proof of Theorem \ref{mtheo0a}. For each $k $ in $\mathbb{N}$ define $\varphi_{n}^{k}=\max\{\varphi_{n},-kn\}$ and $\varphi_{-}^{k}=\max\{\varphi_{-},-k\}$. For each  $\varepsilon >0$ fixed and $r \in \mathbb{N}$ we define $G_{r}^{\varepsilon}=\{x \in M:\varphi_{j}^{k}(x) \leq j (\varphi_{-}^{k}(x)+\varepsilon)$ for some  $j \in \{1,...,r\}\}$.

Using subadditivity of sequence $(\varphi_{n} )_{n}$ and definitions, it is possible to check the following Lemma.

\begin{lemma}\label{lemma-assumpa}

\begin{itemize}
\item [$(i)$] $(\varphi_{n}^{k})_{n}$ is a subadditive sequence for any $k$ fixed.
\item [$(ii)$] $\varphi_{1}^{k}$ is upper bounded for any $k$ fixed.
\item [$(iii)$] $(\frac{\varphi_{n}^{k}}{n})_{n}$ is uniformly bounded by below for any $k$ fixed.
\item [$(iv)$] $\varphi_{-}^{k}(x) = \liminf\limits_{n} \frac{\varphi_{n}^{k}(x)}{n}$ for any $k$ fixed.
\item[$(v)$] For each $j \in \mathbb{N}$ we have that $\varphi_{-}^{k}(f^{j}(x)) = \varphi_{-}^{k}(x)$  $\mu-a.e.$ $x$ in $M$ where $\varphi_{-}^{k}: M \to [-\infty, \infty]$ is given by  $\varphi_{-}^{k}(x) = \liminf\limits_{n} \frac{\varphi_{n}^{k}(x)}{n}$ for any $k$ fixed.
\item[$(vi)$]  $E_{r}^{\varepsilon} \subseteq G_{r}^{\varepsilon}$ for every $\varepsilon > 0$ and $r \in \mathbb{N}$.

\item [$(vii)$]  Fixed $n$,  $(\varphi_{n}^{k})_{k}$ is a nonincreasing  monotonic sequence.
\item [$(viii)$] Fixed $n$, $\lim\limits_{k} \varphi_{n}^{k}(x) = \varphi_{n}(x)$ for all $x$ in $M$, (then $\varphi_{n}^{k} \searrow_{k} \varphi_{n}$).

\item [$(ix)$]  $(\varphi_{-}^{k})_{k}$ is a nonincreasing  monotonic sequence.
\item [$(x)$] $\lim\limits_{k} \varphi_{-}^{k}(x) = \varphi_{-}(x)$ for all $x$ in $M$, (then $\varphi_{-}^{k} \searrow_{k} \varphi_{-}$).
\item [$(xi)$] $(\varphi_{-}^{k})^{+}(x) =(\varphi_{-})^{+}(x)$ for all  $x$ in $M$ and for all $k$ in $\mathbb{N}$.
\end{itemize}
\end{lemma}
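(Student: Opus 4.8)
The plan is to verify each of the eleven assertions directly from the defining formulas $\varphi_{n}^{k}=\max\{\varphi_{n},-kn\}$ and $\varphi_{-}^{k}=\max\{\varphi_{-},-k\}$, using two elementary facts that I would record at the outset: first, for real numbers $a,b$ and a level $c$ one has $\max\{a,c\}+\max\{b,c\}\geq\max\{a+b,2c\}$; second, for any sequence $(a_{n})_{n}$ and any constant $c\in\mathbb{R}$, $\liminf_{n}\max\{a_{n},c\}=\max\{\liminf_{n}a_{n},c\}$ (with the convention $\max\{-\infty,c\}=c$). With these in hand, essentially every item reduces to a short computation.

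For $(i)$ I would argue by cases on which term realizes $\varphi_{m+n}^{k}$: if $\varphi_{m+n}^{k}=\varphi_{m+n}$, then subadditivity of $(\varphi_{n})_{n}$ together with $\varphi_{j}^{k}\geq\varphi_{j}$ gives $\varphi_{m+n}^{k}\leq\varphi_{m}+\varphi_{n}\circ f^{m}\leq\varphi_{m}^{k}+\varphi_{n}^{k}\circ f^{m}$; if instead $\varphi_{m+n}^{k}=-k(m+n)=-km+(-kn)$, then it is bounded by $\varphi_{m}^{k}+\varphi_{n}^{k}\circ f^{m}$ since each truncated term is at least its truncation level. Item $(ii)$ follows from $\varphi_{1}^{k}=\max\{\varphi_{1},-k\}\leq\max\{\beta,-k\}$, and $(iii)$ from $\varphi_{n}^{k}/n\geq -k$ for all $n$. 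For $(iv)$ I apply the $\liminf$-of-max identity with $a_{n}=\varphi_{n}(x)/n$ and $c=-k$. Items $(vii)$ and $(ix)$ are immediate because $k\mapsto -kn$ and $k\mapsto -k$ are decreasing, so the pointwise maxima are nonincreasing in $k$; items $(viii)$ and $(x)$ follow because for fixed $x$ the value $\varphi_{n}(x)$ is a real number, hence eventually $\geq -kn$, while $\varphi_{-}(x)$ is eventually $\geq -k$ when finite and is matched by $-k\to-\infty$ when $\varphi_{-}(x)=-\infty$; combined with $(vii)$, $(ix)$ this gives the monotone convergences $\varphi_{n}^{k}\searrow_{k}\varphi_{n}$ and $\varphi_{-}^{k}\searrow_{k}\varphi_{-}$.

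Item $(v)$ is a consequence of hypothesis $(a)$: applying $\max\{\cdot,-k\}$ to both sides of the a.e. equality $\varphi_{-}(f^{j}(x))=\varphi_{-}(x)$ preserves it. For $(vi)$, if $x\in E_{r}^{\varepsilon}$ pick $j\in\{1,\dots,r\}$ with $\varphi_{j}(x)\leq j(\varphi_{-}(x)+\varepsilon)$; then either $\varphi_{j}^{k}(x)=\varphi_{j}(x)\leq j(\varphi_{-}(x)+\varepsilon)\leq j(\varphi_{-}^{k}(x)+\varepsilon)$ because $\varphi_{-}^{k}\geq\varphi_{-}$, or $\varphi_{j}^{k}(x)=-kj\leq j\,\varphi_{-}^{k}(x)<j(\varphi_{-}^{k}(x)+\varepsilon)$ because $\varphi_{-}^{k}\geq -k$; in either case $x\in G_{r}^{\varepsilon}$. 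Finally $(xi)$ follows from $(\varphi_{-}^{k})^{+}=\max\{\varphi_{-},-k,0\}=\max\{\varphi_{-},0\}=(\varphi_{-})^{+}$, since $-k\leq 0$ for every $k\in\mathbb{N}$.

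None of these steps is a genuine obstacle; the only points that require a little care are the case analysis in $(i)$, the correct use of the $\liminf$-of-max identity in $(iv)$, and keeping track of the value $\varphi_{-}=-\infty$ when passing to the limit in $(x)$. I would therefore state the two auxiliary facts about $\max$ and $\liminf$ first and then dispatch the eleven items in the listed order, which is also the order in which they feed into the truncation argument used to finish the proof of Theorem \ref{mtheo0a}.
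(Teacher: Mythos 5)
Your proposal is correct, and it follows exactly the route the paper intends: the paper gives no written proof of this lemma beyond the remark that it can be checked ``using subadditivity of the sequence $(\varphi_{n})_{n}$ and the definitions,'' and your item-by-item verification supplies precisely those routine checks (the only cosmetic slip is that your stated auxiliary inequality uses a single truncation level $c$, whereas item $(i)$ needs the two-level version $\max\{a,c\}+\max\{b,d\}\geq\max\{a+b,c+d\}$ with $c=-km$, $d=-kn$, which your case analysis in the body in fact handles correctly).
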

\begin{flushright}
$\blacksquare$
\end{flushright}

By $(vi)$,  $E_{r}^{\varepsilon} \subseteq G_{r}^{\varepsilon}$ for every $\varepsilon > 0$ and $r \in \mathbb{N}$. In particular,   $f^{-i}(M\setminus G_{r}^{\varepsilon}) \subseteq f^{-i}(M\setminus E_{r}^{\varepsilon})$ for all $i \geq 0$. Note that

\begin{center}
$\mu(f^{-i}(M\setminus G_{r}^{\varepsilon}))\leq \mu(f^{-i}(M\setminus E_{r}^{\varepsilon})) $, and then
\end{center}

 $\lim\limits_{r \to +\infty} \limsup \limits_{n} \frac{1}{n}\sum \limits_{i=0}^{n-r-1} \mu(M\setminus f^{-i}(G_{r}^{\varepsilon})) \leq \lim\limits_{r \to +\infty} \limsup \limits_{n} \frac{1}{n}\sum \limits_{i=0}^{n-r-1} \mu(M\setminus f^{-i}(E_{r}^{\varepsilon}))$. Therefore for each $k$ we have that the sequence $(\varphi_{n}^{k})_{n}$ satisfies the hypothesis of Theorem \ref{mtheo11a}, so

\begin{equation}\label{r2a}
\int \liminf\limits_{n} \frac{\varphi_{n}^{k}(x)}{n} d\mu = \int \varphi_{-}^{k}d\mu =  \lim\limits_{n} \int \frac{\varphi_{n}^{k}}{n} d\mu = \inf\limits_{n} \int \frac{\varphi_{n}^{k}}{n} d\mu.
\end{equation}

We claim that
\begin{equation}\label{r3a}
\inf_{k} \int \varphi_{n}^{k}d\mu = \int \varphi_{n} d\mu.
\end{equation}

To see this recall that $\varphi_{n}^{k} \searrow_{k} \varphi_{n}$ with $\varphi_{n}^{k}=\max\{\varphi_{n},-kn\}$, so  $\varphi_{n}^{1} \geq  \varphi_{n}^{k}$  for all  $k$. Consider  $\gamma_{k} = \varphi_{n}^{1} - \varphi_{n}^{k} \geq 0$, and note that $\gamma_{k} = \varphi_{n}^{1} - \varphi_{n}^{k} \leq \varphi_{n}^{1} - \varphi_{n}^{k+1} = \gamma_{k+1}$. Thus $(\gamma_{k})_{k}$ is  nondecreasing  monotonic sequence and $\gamma_{k} \nearrow_{k} \varphi_{n}^{1} - \varphi_{n}$ , and by monotone convergence theorem, $\int  \varphi_{n}^{1} - \varphi_{n} d\mu  =\int \lim\limits_{k\to \infty} \gamma_{k} d\mu = \lim\limits_{k\to \infty} \int \gamma_{k} d\mu  = \lim\limits_{k\to \infty} \int \varphi_{n}^{1} - \varphi_{n}^{k} d\mu $, and then  $\int   \varphi_{n} d\mu = \lim\limits_{k\to \infty} \int  \varphi_{n}^{k} d\mu$ and $\lim\limits_{k\to \infty} \int  \varphi_{n}^{k} d\mu =\inf_{k} \int \varphi_{n}^{k}d\mu$

Similarly, using monotone convergence theorem,
\begin{equation}\label{r4a}
\inf\limits_{k} \int \varphi_{-}^{k}d\mu = \int \varphi_{-} d\mu
\end{equation}

By $(\ref{r2a})$, $(\ref{r3a})$ and $(\ref{r4a})$, we have that

\begin{center}
$\int \varphi_{-} d\mu=\inf\limits_{k} (\int \varphi_{-}^{k} d\mu) =  \inf\limits_{k} (\inf\limits_{n} \int \frac{\varphi_{n}^{k}}{n} d\mu) = \inf\limits_{n}\frac{1}{n} (\inf\limits_{k} \int \varphi_{n}^{k} d\mu)= \inf\limits_{n}\frac{1}{n} (\int \varphi_{n} d\mu) $
\end{center}

Then

\begin{equation}
\int \varphi_{-} d\mu = \inf\limits_{n}\frac{1}{n} \int \varphi_{n} d\mu.
\end{equation}

This concludes the proof  of Theorem \ref{mtheo0a}.


\def\cprime{$'$}


\end{document}